\def\bR {\mathbf{R}}
\def\bS {\mathbf{S}}
\def\bZ {\mathbf{Z}}
\def\cB {\mathcal{B}}
\def\cC {\mathcal{C}}
\def\cD {\mathcal{D}}
\def\cL {\mathcal{L}}
\def\cN {\mathcal{N}}
\def\cQ {\mathcal{Q}}
\def\cR {\mathcal{R}}
\def\cT {\mathcal{T}}
\def\a {{\alpha}}
\def\b {{\beta}}
\def\g {{\gamma}}
\def\de {{\delta}}
\def\eps {{\epsilon}}
\def\th {{\theta}}
\def\ka {{\kappa}}
\def\si {{\sigma}}
\def\Si {{\Sigma}}
\def\om {{\omega}}
\def\d {{\partial}}
\def\grad {{\nabla}}
\def\Dlt {{\Delta}}
\def\la {\langle}
\def\ra {\rangle}
\def \La {\bigg\langle}
\def \Ra {\bigg\rangle}
\def\wto {{\rightharpoonup}}
\def\dd {\mathrm{d}}
\newcommand{\Div}{\operatorname{div}}
\newcommand{\Dom}{\operatorname{Dom}}
\newcommand{\Span}{\operatorname{span}}
\newcommand{\Ker}{\operatorname{Ker}}
\newcommand{\Img}{\operatorname{Im}}
\newcommand{\ba}{\begin{aligned}}
\newcommand{\ea}{\end{aligned}}
\newcommand{\be}{\begin{equation}}
\newcommand{\ee}{\end{equation}}
\newcommand{\lb}{\label}
\newtheorem{theorem}{Theorem}[section]
\newtheorem{proposition}{Proposition}
\theoremstyle{definition}
\newtheorem{remark}{Remark}
\definecolor{pervinca}{rgb}{0.4, 0.3, 0.8}
\definecolor{violetto}{rgb}{0.8, 0.4, 0.8 }
\begin{document}

\title[Vlasov-Stokes for Aerosol Flows]{A Derivation of the Vlasov-Stokes System\\ for Aerosol Flows from the KInetic Theory\\ of Binary Gas Mixtures}

\author[E. Bernard]{Etienne Bernard}
\address[E.B.]{IGN-LAREG, Universit\'e Paris Diderot, B\^atiment Lamarck A, 5 rue Thomas Mann, Case courrier 7071, 75205 Paris Cedex 13, France}
\email{esteve.bernard@gmail.com}

\author[L. Desvillettes]{Laurent Desvillettes}
\address[L.D.]{Universit\'e Paris Diderot, Sorbonne Paris Cit\'e, Institut de Math\'ematiques de Jussieu - Paris Rive Gauche, UMR CNRS 7586, CNRS, Sorbonne Universit\'es, UPMC Univ. Paris 06, 75013, Paris, France}
\email{desvillettes@math.univ-paris-diderot.fr}

\author[F. Golse]{Fran\c cois Golse}
\address[F.G.]{CMLS, Ecole polytechnique et CNRS, Universit\'e Paris-Saclay, 91128 Palaiseau Cedex, France}
\email{francois.golse@polytechnique.edu}

\author[V. Ricci]{Valeria Ricci}
\address[V.R.]{Dipartimento di Matematica e Informatica, Universit\`a degli Studi di Palermo,
Via Archirafi 34, I90123 Palermo, Italy}
\email{valeria.ricci@unipa.it}

\begin{abstract}
In this short paper, we formally derive the thin spray equation for a steady Stokes gas (i.e. the equation consists in a  coupling between a kinetic --- Vlasov type --- equation for the dispersed phase and a --- steady --- Stokes equation 
for the gas). Our starting point is a system of Boltzmann equations for a binary gas mixture. The derivation follows the procedure already outlined in [Bernard-Desvillettes-Golse-Ricci, {\tt arXiv:1608.00422}[math.AP]] where 
the evolution of the gas is governed by the Navier-Stokes equation.
\end{abstract}

\keywords{Vlasov-Stokes system; Boltzmann equation; Hydrodynamic limit; Aerosols; Sprays; Gas mixture}

\subjclass{35Q20, 35B25, (82C40, 76T15, 76D07)}

\maketitle


\section{Introduction}


An aerosol or a spray is a fluid consisting of a \textit{dispersed phase}, usually liquid droplets, sometimes solid particles, immersed in a gas referred to as the \textit{propellant}.

An important class of models for the dynamics of aerosol/spray flows consists of

(a) a kinetic equation for the dispersed phase, and

(b) a fluid equation for the propellant.

The kinetic equation for the dispersed phase and the fluid equation for the propellant are coupled through the drag force exerted by the gas on the droplets/particles. This class of models applies to the case of \textit{thin} sprays, i.e. 
those for which the volume fraction of the dispersed phase is typically $\ll 1$.

Perhaps the simplest example of this class of models is the Vlasov-Stokes system:
$$
\left\{
\ba
{}&\d_tF+v\cdot\grad_xF-\frac{\ka}{m_p}\Div_v((v-u)F)=0\,,
\\
&-\rho_g\nu\Dlt_xu=-\grad_xp+\ka\int_{\bR^3}(v-u)F\,\dd v\,,
\\
&\Div_xu=0\,.
\ea
\right.
$$
The unknowns in this system are $F\equiv F(t,x,v)\ge 0$, the distribution function of the dispersed phase, i.e. the number density of particles or droplets with velocity $v$ located at the position $x$ at time $t$, and $u\equiv u(t,x)\in\bR^3$,
the velocity field in the gas. The parameters $\ka$, $m_p$, $\rho_g$ and $\nu$ are positive constants. Specifically, $\ka$ is the friction coefficient of the gas on the dispersed phase, $m_p$ is the mass of a particle or droplet, and $\rho_g$ 
is the gas density, while $\nu$ is the kinematic viscosity of the gas. The aerosol considered here is assumed for simplicity to be \textit{monodisperse} --- i.e. all the particle in the dispersed phase are of the same size and of the same mass. 
In practice, the particles in the dispersed phase of an aerosol are in general distributed in size (and in mass).

The last equation in the system above indicates that the gas flow is considered as incompressible\footnote{It is well known that the motion of a gas at a very low Mach number is governed by the equations of incompressible fluid mechanics, 
even though a gas is a compressible fluid. A formal justification for this fact can be found on pp. 11--12 in \cite{PLLFluidMech1}.}. The scalar pressure field $p\equiv p(t,x)\in\bR$ is instantaneously coupled to the unknowns $F$ and $u$ by 
the Poisson equation
$$
\Dlt_xp=\ka\Div_x\int_{\bR^3}(v-u)F\,\dd v\,.
$$
The mathematical theory of the Vlasov-Stokes system has been discussed in \cite{Hamdache} --- see in particular section 6 there, which treats the case of a steady Stokes equation as above.

Our purpose in the present work is to provide a rigorous derivation of this system from a more microscopic system.

Derivations of the Stokes equation with a force term including the drag force exerted by the particles on the fluid (known as the Brinkman force) from a system consisting of a large number of particles immersed in a viscous fluid can be 
found in \cite{Allaire,DesvFGRicci08}. Both results are based on the method of homogenization of elliptic operators on domains with holes of finite capacity, pioneered by Khruslov and his school --- see for instance \cite{Khruslov,CioraMurat}.
Unfortunately, this method assumes that the minimal distance between particles remains uniformly much larger than the particle radius $r\ll 1$. Specifically, this minimal distance is assumed in \cite{DesvFGRicci08} to be of the order of
$r^{1/3}$ in space dimension $3$; this condition has been recently improved in \cite{Hillairet}. Such particle configurations are of vanishing probability as the particle number $N\to\infty$: see for instance Proposition 4 in \cite{Hauray}. 
Moreover, the question of propagating this separation condition by the particle dynamics seems open so far --- see however \cite{JabinOtto} for interesting ideas on a similar problem for a first order dynamics.

For that reason, we have laid out in \cite{BDGR} a program for deriving dynamical equations for aerosol flows from a system of Boltzmann equations for the dispersed phase and the propellant viewed as a binary gas mixture. In \cite{BDGR},
we have given a complete formal derivation of the Vlasov-(incompressible) Navier-Stokes system from a scaled system of Boltzmann equations. We have identified the scaling leading to this system, which involves two small parameters.
One is the mass ratio $\eta$ of the gas molecule to the particle in the dispersed phase. The other small parameter is the ratio $\eps$ of the thermal speed of the dispersed phase to the speed of sound in the gas. The assumption $\eta\ll 1$ 
implies that the gas molecules impingement on the particles in the dispersed phase results in a slight deviation of these particles, and this accounts for the replacement of one of the collision integrals in the Boltzmann system by a Vlasov 
type term. The assumption $\eps\ll 1$ explains why a low Mach number approximation is adequate for the motion equation in the propellant. In particular, the velocity field in the propellant is approximately divergence free, and the motion 
equation in the gas is the same as in an incompressible fluid with constant density.

However, a more intricate scaling is needed to derive the Vlasov-Stokes system above from the sytem of Boltzmann equations for a binary mixture. If the ratio $\mu$ of the mass density of the propellant to the mass density of the dispersed 
phase is very small, and the thermal speed in the dispersed phase is much smaller than that of the propellant, one can hope that the friction force exerted by the dispersed phase on the propellant will slow down the gas, so that Navier-Stokes 
motion equation can be replaced with a Stokes equation. Although this scenario sounds highly plausible, the asymptotic limit of the system of Boltzmann equations for a binary gas mixture leading to the Vlasov-Stokes system rests on a
rather delicate tuning of the three small parameters $\eps,\eta,\mu$, defined in the statement of our main result, Theorem \ref{theor}.

The outline of this paper is as follows: section \ref{S-S2} introduces the system of Boltzmann equation for binary gas mixtures, identifies the scaling parameters involved in the problem, and presents two classes of Boltzmann type collision 
integrals describing the interaction between the dispersed phase and the propellant. Section \ref{S-S3} formulates a few (specifically, five) key abstract assumptions on the interaction between the dispersed phase and the propellant, which 
are verified by the models introduced in section \ref{S-S2}. The main result of the present paper, i.e. the derivation of the Vlasov-Stokes system from the system of Boltzmann equations for a binary gas mixture, is Theorem \ref{theor}, stated 
at the begining of section \ref{S-S4}. The remaining part of section \ref{S-S4} is devoted to the proof of Theorem \ref{theor}.

Obviously, the present paper shares many features with its companion \cite{BDGR} --- we have systematically used the same notation in both papers. However, the derivation of the Vlasov-Stokes system differs in places from that of the
Vlasov-Navier-Stokes system in \cite{BDGR}. For instance, some assumptions on the interaction between the propellant and the dispersed phase used in the present paper are slightly different from their analogues in \cite{BDGR}. 
We have therefore kept the repetitions between \cite{BDGR} and the present paper to a strict minimum. Only the part of the proof of Theorem \ref{theor} that is special to the derivation of the Vlasov-Stokes system is given in full detail. The 
reader is referred to \cite{BDGR} for all the arguments which have been already used in the derivation of the Vlasov-Navier-Stokes system.


\section{Boltzmann Equations for Multicomponent Gases}\lb{S-S2}


Consider a binary mixture consisting of microscopic gas molecules and much bigger solid dust particles or liquid droplets. For simplicity, we henceforth assume that the dust particles or droplets are identical (in particular, the spray is 
monodisperse: all particles have the same mass), and that the gas is monatomic. We denote from now on by $F\equiv F(t,x,v)\ge 0$ the distribution function of dust particles or droplets, and by $f\equiv f(t,x,w)\ge 0$ the distribution function 
of gas molecules. These distribution functions satisfy the system of Boltzmann equations
\be\lb{BoltzSys0}
\ba
(\d_t+v\cdot\grad_x)F&=\cD(F,f)+\cB(F)\,,
\\
(\d_t+w\cdot\grad_x)f&=\cR(f,F)+\cC(f)\,.
\ea
\ee
The terms $\cB(F)$ and $\cC(f)$ are the Boltzmann collision integrals for pairs of dust particles or liquid droplets and for gas molecules respectively. The terms $\cD(F,f)$ and $\cR(f,F)$ are Boltzmann type collision integrals describing 
the deflection of dust particles or liquid droplets subject to the impingement of gas molecules, and the slowing down of gas molecules by collisions with dust particles or liquid droplets respectively.

Collisions between molecules are assumed to be elastic, and satisfy therefore the usual local conservation laws of mass, momentum and energy, while collisions between dust particles may not be perfectly elastic, so that $\cB(F)$ satisfies 
only the local conservation of mass and momentum. Since collisions between gas molecules and particles preserve the nature of the colliding objects, the collision integrals $\cD$ and $\cR$ satisfiy the local conservation laws of particle 
number per species and local balance of momentum. The local balance of energy is satisfied if the collisions between gas molecules and particles are elastic.

The system (\ref{BoltzSys0}) is the starting point in our derivation of the Vlasov-Navier-Stokes system in \cite{BDGR}. We shall mostly follow the derivation in \cite{BDGR}, and shall insist only on the differences between the limit considered
there and the derivation of the Vlasov-Stokes system studied in the present paper.

\subsection{Dimensionless Boltzmann systems}


We assume for simplicity that the aerosol is enclosed in a periodic box of size $L>0$, i.e. $x\in\bR^3/L\bZ^3$. The system of Boltzmann equations (\ref{BoltzSys0}) involves an important number of physical parameters, which are listed in 
the table below.


\bigskip
\begin{center}
\begin{tabular}{|c|c|}
\hline
\hspace{.2cm} Parameter \hspace{.2cm} & \hspace{.2cm} Definition \hspace{.2cm}\\
\hline
\hline
\hspace{.2cm} $L$ \hspace{.2cm} & \hspace{.2cm} size of the container (periodic box) \hspace{.2cm}\\
\hline
\hspace{.2cm} $\cN_p$ \hspace{.2cm} & \hspace{.2cm} number of particles$/L^3$ \hspace{.2cm}\\
\hline
\hspace{.2cm} $\cN_g$ \hspace{.2cm} & \hspace{.2cm} number of gas molecules$/L^3$ \hspace{.2cm}\\
\hline
\hspace{.2cm} $V_p$ \hspace{.2cm} & \hspace{.2cm} thermal speed of particles \hspace{.2cm}\\
\hline
\hspace{.2cm} $V_g$ \hspace{.2cm} & \hspace{.2cm} thermal speed of gas molecules \hspace{.2cm}\\
\hline
\hspace{.2cm} $S_{pp}$ \hspace{.2cm} & \hspace{.2cm} average particle/particle cross-section \hspace{.2cm}\\
\hline
\hspace{.2cm} $S_{pg}$ \hspace{.2cm} & \hspace{.2cm} average particle/gas cross-section \hspace{.2cm}\\
\hline
\hspace{.2cm} $S_{gg}$ \hspace{.2cm} & \hspace{.2cm} average molecular cross-section \hspace{.2cm}\\
\hline
\hspace{.2cm} $\eta=m_g/m_p$ \hspace{.2cm} & \hspace{.2cm} mass ratio (molecules/particles) \hspace{.2cm}\\
\hline
\hspace{.2cm} $\mu=(m_g \cN_g)/(m_p \cN_p)$ \hspace{.2cm} & \hspace{.2cm} mass fraction (gas/dust or droplets) \hspace{.2cm}\\
\hline
\hspace{.2cm} $\eps=V_p/V_g$ \hspace{.2cm} & \hspace{.2cm} thermal speed ratio (particles/molecules) \hspace{.2cm}\\
\hline
\end{tabular}
\end{center}


\smallskip
This table of parameters is the same as in \cite{BDGR}, except for the mass fraction $\mu$ which does not appear in \cite{BDGR}.

\bigskip
We first define a dimensionless position variable:
$$
\hat x:=x/L\,,
$$
together with dimensionless velocity variables for each species:
$$
\hat v:=v/V_p\,,\quad \hat w:=w/V_g\,.
$$
In other words, the velocity of each species is measured in terms of the thermal speed of the particles in the species under consideration. 

Next, we define a time variable, which is adapted to the slowest species, i.e. the dust particles or droplets:
$$
\hat t:=tV_p/L\,.
$$

Finally, we define dimensionless distribution functions for each particle species:
$$
\hat F(\hat t,\hat x,\hat v):=V^3_pF(t,x,v)/\cN_p\,,\qquad\hat f(\hat t,\hat x,\hat w):=V^3_gf(t,x,w)/\cN_g\,.
$$

The definition of dimensionless collision integrals is more complex and involves the average collision cross sections $S_{pp},S_{pg},S_{gg}$, whose definition is recalled below. 

The collision integrals $\cB(F)$, $\cC(f)$, $\cD(F,f)$ and $\cR(f,F)$ are given by expressions of the form
\be
\label{Colli}
\ba
\cB(F)(v)=&\iint_{\bR^3\times\bR^3}F(v')F(v'_*)\Pi_{pp}(v,\dd v'\,\dd v'_*)
\\
&-F(v)\int_{\bR^3}F(v_*)|v-v_*|\Si_{pp}(|v-v_*|)\,\dd v_*\,,
\\
\cC(f)(w)=&\iint_{\bR^3\times\bR^3}f(w')f(w'_*)\Pi_{gg}(w,\dd w'\,\dd w'_*)
\\
&-f(w)\int_{\bR^3}f(w_*)|w-w_*|\Si_{gg}(|w-w_*|)\,\dd w_*\,,
\\
\cD(F,f)(v)=&\iint_{\bR^3\times\bR^3}F(v')f(w')\Pi_{pg}(v,\dd v'\,\dd w')
\\
&-F(v)\int_{\bR^3}f(w)|v-w|\Si_{pg}(|v-w|)\,\dd w\,,
\\
\cR(f,F)(w)=&\iint_{\bR^3\times\bR^3}F(v')f(w')\Pi_{gp}(w,\dd v'\,\dd w')
\\
&-f(w)\int_{\bR^3}F(v)|v-w|\Si_{pg}(|v-w|)\,\dd v\,.
\ea
\ee
In these expressions, $\Pi_{pp},\Pi_{gg},\Pi_{pg},\Pi_{gp}$ are nonnegative, measure-valued measurable functions defined a.e. on $\bR^3$, while $\Si_{pp},\Si_{gg},\Si_{pg}$ are nonnegative measurable functions defined 
a.e. on $\bR_+$. This setting is the same as in \cite{BDGR}, and is taken from chapter 1 in \cite{Landau10} (see in particular formula (3.6) there).

The relation between the quantities $\Pi$ and $\Si$ is the following:
\begin{equation}\label{Colli2}
\ba 
\int_{\bR^3}\Pi_{pp}(v,\dd v'\,\dd v'_*) \, \dd v =  |v'-v'_*|\Si_{pp}(|v'-v'_*|)\dd v'\,\dd v'_*,
\\
\int_{\bR^3} \Pi_{gg}(w,\dd w'\,\dd w'_*) \, \dd w = |w'-w'_*|\Si_{gg}(|w'-w'_*|)\dd w'\,\dd w'_*,
\\
\int_{\bR^3} \Pi_{pg}(v,\dd v'\,\dd w') \, \dd v = |v'-w'|\Si_{pg}(|v'-w'|)\dd v'\,\dd w',
\\
\int_{\bR^3} \Pi_{gp}(w,\dd v'\,\dd w') \, \dd w = |v'-w'|\Si_{pg}(|v'-w'|)\dd v'\,\dd w'.
\ea
\end{equation}

The dimensionless quantities associated to $\Si_{pp},\Si_{gg}$ and $\Si_{pg}$ are ($i,j=p,g$)
$$
\ba
\hat\Si_{ii}(|\hat z|)&=\Si_{ii}(V_i|\hat z|)/S_{ii}\,,
\\
\hat\Si_{ij}(|\hat z|)&=\Si_{ij}(V_j|\hat z|)/S_{ij}\,.
\ea
$$
Likewise
$$
\ba
\hat\Pi_{pp}(\hat v,\dd\hat v'\,\dd\hat v'_*)&=\Pi_{pp}(v,\dd v'\,\dd v'_*)/S_{pp}V_p^4\,,
\\
\hat\Pi_{gg}(\hat w,\dd\hat w'\,\dd\hat w'_*)&=\Pi_{gg}(w,\dd w'\,\dd w'_*)/S_{gg}V_g^4\,,
\\
\hat\Pi_{pg}(\hat v,\dd\hat v'\,\dd\hat w')&=\Pi_{pg}(v,\dd v'\,\dd w')/S_{pg}V_g^4\,,
\\
\hat\Pi_{gp}(\hat w,\dd\hat v'\,\dd\hat w')&=\Pi_{gp}(w,\dd v'\,\dd w')/S_{pg}V_gV_p^3\,.
\ea
$$

With the dimensionless quantities so defined, we arrive at the following dimensionless form of the multicomponent Boltzmann system:
\be\lb{BoltzSys}
\left\{
\ba
{}&\d_{\hat t}\hat F\,+\,\hat v\cdot\grad_{\hat x}\hat F\,=\cN_gS_{pg}L\frac{V_g}{V_p}\hat\cD(\hat F,\hat f)+\cN_pS_{pp}L\hat\cB(\hat F)\,,
\\
&\d_{\hat t}\hat f\!+\!\frac{V_g}{V_p}\hat w\!\cdot\!\grad_{\hat x}\hat f=\cN_pS_{pg}L\frac{V_g}{V_p}\hat\cR(\hat f,\hat F)+\cN_gS_{gg}L\frac{V_g}{V_p}\hat\cC(\hat f)\,.
\ea
\right.
\ee

Throughout the present study, we shall always assume that 
\be\lb{NoppColl}
\cN_pS_{pp}L\ll 1\,,
\ee
so that the collision integral for dust particles or droplets $\cN_pS_{pp}L\hat\cB(\hat F)$   is considered as formally negligible (and will not appear anymore in the equations).

Besides, the thermal speed $V_p$ of dust particles or droplets is in general smaller than the thermal speed $V_g$ of gas molecules; thus we denote their ratio by
\be\lb{Def-eps}
\eps=\frac{V_p}{V_g}\in[0,1]\,.
\ee

Recalling that the mass ratio $[0,1]\ni \eta = m_g/m_p$ is supposed to be extremely small, since the particles are usually much heavier than the molecules, we also assume 
\be\lb{Def-eta}
\frac{\eta}{\mu}=\frac{\cN_p}{\cN_g}\in[0,1]\,.
\ee
where $\mu$ is the mass fraction of the gas with respect to the droplets, which is also supposed to be extremely small. This hypothesis on  the mass ratio gives a scaling such that the mass density of the gas is very small with respect 
to the mass density of the dispersed phase.

Finally, in the sequel (cf. eq. (\ref{kifu})), we shall assume that 
$$
\cN_p\,S_{pg}\, L =\frac{\eps}{\mu}\quad\hbox{ and }\quad\cN_g\,S_{gg}\, L = \frac{\mu}{\eps}\,,\qquad\quad\hbox{ where }\eps\ll\mu\ll 1\,.
$$ 
With these assumptions, one has
$$
\ba
(\cN_gS_{pg}L)\frac{V_g}{V_p}=(\frac{\cN_g}{\cN_p})(\cN_pS_{pg}L)(\frac{V_g}{V_p})=\frac1\eta\,,
\\	\\
(\cN_pS_{pg}L)(\frac{V_g}{V_p})=\frac1\mu\,,
\quad
(\cN_gS_{gg}L)(\frac{V_g}{V_p})=\frac{\mu}{\eps^2}\,,
\\	\\
\cN_pS_{pp}L\ll 1\,,
\ea
$$
so that we arrive at the scaled system:
\be\lb{BoltzSysSc}
\left\{
\ba
{}&\d_{\hat t}\hat F\,+\,\hat v\cdot\grad_{\hat x}\hat F\,=\frac{1}{\eta}\hat\cD(\hat F,\hat f)\,,
\\
&\d_{\hat t}\hat f+\frac{1}{\eps}\hat w\cdot\grad_{\hat x}\hat f=\frac1\mu\hat\cR(\hat f,\hat F)+\frac{\mu}{\eps^2}\hat\cC(\hat f)\,.
\ea
\right.
\ee

Henceforth, we drop hats on all dimensionless quantities and variables introduced in this section. Only dimensionless variables, distribution functions and collision integrals will be considered from now on. 

We also use $V,W$ as variables in the positive part of the collision operators $\cD$ and $\cR$, in order to avoid confusions.

We define therefore the ($\eps$- and $\eta$-dependent) dimensionless collision integrals
\begin{equation} \label{newc}
 \ba 
 \cC(f)( w)=&\iint_{\bR^3\times\bR^3}f(w') f(w'_*) \Pi_{gg}(w,\dd w'\,\dd w'_*)
\\ 
&- f(w)\int_{\bR^3} f(w_*)| w- w_*|  \Si_{gg}(|w- w_*|)\,\dd w_*\,,
\ea 
\end{equation} 
\begin{equation} \label{newd}
\ba 
\cD( F, f)( v)=&\iint_{\bR^3\times\bR^3} F(V)f(W)\Pi_{pg}( v,\dd V\,\dd W)
\\
&- F(v)\int_{\bR^3} f(w)\left|\eps v- w\right|\Si_{pg}\left(\left|\eps v- w\right|\right)\,\dd w\,,
\ea
\end{equation} 
\begin{equation} \label{newr}
\ba 
\cR( f, F)( w)=&\iint_{\bR^3\times\bR^3} F(V) f(W) \Pi_{gp}( w,\dd V\,\dd W)
\\
&- f( w)\int_{\bR^3} F(v)\left|\eps v- w\right| \Si_{pg}\left(\left|\eps v-w\right|\right)\,\dd  v\,,
\ea
\end{equation} 
with $\Si_{gg}$, $\Si_{pg}$ satisfying (\ref{Colli2}). Notice that $\Pi_{pg}$ and $\Pi_{gp}$ depend in fact on $\eps$ and $\eta$, and will sometimes be denoted by $\Pi_{pg}^{\eps,\eta}$ and $\Pi_{gp}^{\eps,\eta}$, whenever needed.

With the notation defined above, the scaled Boltzmann system (\ref{BoltzSysSc}) is then recast as:
\be\lb{BoltzSysSc2}
\left\{
\ba
{}&\d_t F\,+\,v\cdot\grad_x F\,=\frac{1}{\eta}\cD(F,f)\,,
\\
&\d_t f+\frac{1}{\eps}w\cdot\grad_x f=\frac1\mu\cR(f, F)+\frac{\mu}{\eps^2}\cC(f)\,.
\ea
\right.
\ee

At this point, it may be worthwhile explaining the difference between the scalings considered in the present paper and in \cite{BDGR}. 

In \cite{BDGR}, we implicitly assumed that $\mu=1$, while we have assumed in the present paper that $\mu\ll 1$. 

When $\mu\ll 1$, the density of the dispersed phase is much higher than that of the propellant, and since $\eps\ll 1$, the thermal speed of the dispersed phase is much smaller than that of the gas. Therefore the dispersed phase
slows down the motion of gas molecules, so that the Reynolds number in the gas becomes small and the material derivative in the Navier-Stokes equation becomes negligible. This qualitative argument explains why the scaling
considered in the present paper leads to a steady Stokes equation in the gas, while the assumption $\mu=1$ as in \cite{BDGR} leads to a Navier-Stokes equation.

\subsection{The explicit form of collision integrals in two physical situations}


\subsubsection{The Boltzmann collision integral for gas molecules}


The dimensionless collision integral $\cC(f)$ is given by the formula
\begin{equation}\label{cc1}
\cC(f)(w)=\iint_{\bR^3\times\bS^2}(f(w')f(w'_*)-f(w)f(w_*))c(w-w_*,\om)\,\dd w_*\dd \om ,
\end{equation}
for each measurable $f$ defined a.e. on $\bR^3$ and rapidly decaying at infinity, where 
\begin{equation}\label{cc2}
\ba
w'\equiv\,w'(w,w_*,\om):=w\,-(w-w_*)\cdot\om\om\,,
\\
w'_*\equiv\!w'_*(w,w_*,\om):=w_*\!+(w-w_*)\cdot\om\om\,,
\ea
\end{equation}
(see formulas (3.11) and (4.16) in chapter II of \cite{Cerci75}). The collision kernel $c$ is of the form
\begin{equation}\label{cc3}
c(w-w_*,\om)=|w-w_*|\si_{gg}(|w-w_*|,|\cos(\widehat{w-w_*,\om})|),
\end{equation}
where $\si_{gg}$ is the dimensionless differential cross-section of gas molecules. In other words, 
$$
\Si_{gg}(|z|)=4\pi\int_0^1\si_{gg}(|z|,\mu)\,\dd\mu\,,
$$
while
\begin{equation}
\label{Pigg}
\Pi_{gg}(w,\dd W\,\dd W_*)=\iint_{\bR^3\times\bS^2}\de_{w'(w,w_*,\om)}\otimes\de_{w'_*(w,w_*,\om)}c(w-w_*,\om)\,\dd w_*\dd\om\,.
\end{equation}
where this last formula is to be understood as explained in section 2.3.1 of \cite{BDGR}. Specifically, for each test function $\phi\equiv\phi(W,W_*)\in C_b(\bR^3\times\bR^3)$, 
$$
\ba
\iint_{\bR^3\times\bR^3}\phi(W,W_*)\Pi_{gg}(w,\dd W\,\dd W_*)&
\\
=\iint_{\bR^3\times\bS^2}\phi(w'(w,w_*,\om),w'_*(w,w_*,\om))c(w-w_*,\om)\,\dd w_*\dd\om&\,.
\ea
$$

\smallskip
We also assume that the molecular interaction is defined in terms of a hard potential satisfying Grad's cutoff assumption. In other words, we assume:

\medskip
\noindent
\textbf{Assumption A1}: There exists $c_*>1$ and $\g\in[0,1]$ such that
\begin{equation}\label{cc4}
\ba
{}&0\,<\,c(z,\om)\,\le\, c_*(1+|z|)^\g\,,&&\quad\hbox{ for a.e. }(z,\om)\in\bR^3\times\bS^2\,,
\\
&\int_{\bS^2}c(z,\om)\,\dd\om\ge\frac1{c_*}\frac{|z|}{1+|z|}\,,&&\quad\hbox{ for a.e. }z\in\bR^3\,.
\ea
\end{equation}

\medskip
We next review in detail the properties of the linearization of the collision integral $\cC$ about a uniform Maxwellian $M$, which is defined by the formula
\begin{equation}\label{defL}
\cL\phi:=-M^{-1}D\cC(M)\cdot(M\phi)\,,
\end{equation}
where $D$ designates the (formal) Fr\'echet derivative. Without loss of generality, one can choose the uniform Maxwellian to be given by the expression
\begin{equation}\label{maxw}
M(w):=\tfrac1{(2\pi)^{3/2}}e^{-|w|^2/2}\,,
\end{equation}
after some Galilean transformation eliminating the mean velocity of $M$, and some appropriate choice of units so that the temperature and pressure associated to this Maxwellian state are  both equal to $1$.

We recall the following theorem, due to Hilbert (in the case of hard spheres) and Grad (in the case of hard cutoff potentials): see Theorem I on p.186 and Theorem II on p.187 in \cite{Cerci75}.

\begin{theorem}
The linearized collision integral $\cL$ is an unbounded operator on $L^2(Mdv)$ with domain 
$$
\Dom\cL=L^2((\bar c\star M)^2 Mdv)\,,\quad\hbox{ where }\bar c(z):=\int_{\bS^2}c(z,\om)d\om\,.
$$
Moreover, $\cL=\cL^*\ge 0$, with nullspace
\begin{equation}\label{null}
\Ker\cL=\Span\{1,w_1,w_2,w_3,|w|^2\}.
\end{equation}
Finally, $\cL$ is a Fredholm operator, so that
$$
\Img\cL=\Ker\cL^\bot\,.
$$
\end{theorem}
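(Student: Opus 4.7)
The plan is to follow the classical Hilbert--Grad program. First I would compute $\cL$ explicitly by substituting $f=M(1+\phi)$ into $\cC$ and retaining only the $O(\phi)$ terms; using the micro-reversibility identity $M(w')M(w'_*)=M(w)M(w_*)$ that follows from the conservation laws in \eqref{cc2}, the result takes the form $\cL\phi(w)=\nu(w)\phi(w)-K\phi(w)$, where the collision frequency is exactly $\nu(w)=(\bar c\star M)(w)$ (coming from the loss term in \eqref{cc1}) and $K$ is an integral operator built out of the three remaining terms in the gain/loss expansion. This identification immediately singles out the natural domain $\Dom\cL=L^2((\bar c\star M)^2 M\dd w)$ as the domain of the multiplication $\phi\mapsto \nu\phi$ in $L^2(M\dd w)$; Assumption A1 ensures that $\nu(w)$ is pinched between two positive multiples of $(1+|w|)^{\g}$, so that the Schwartz class provides a dense core.

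Next I would derive the classical symmetrized quadratic form. The change of variables $(w,w_*,\om)\mapsto(w',w'_*,\om)$ on $\bR^3\times\bR^3\times\bS^2$ is a measure-preserving involution that leaves both $c(w-w_*,\om)$ and the product $M(w)M(w_*)$ invariant; composing with the swap $w\leftrightarrow w_*$ and averaging one obtains
\begin{equation*}
\langle\phi,\cL\psi\rangle_{L^2(M\dd w)}=\tfrac14\iiint(\phi+\phi_*-\phi'-\phi'_*)(\psi+\psi_*-\psi'-\psi'_*)\,c\,M M_*\,\dd\om\,\dd w_*\,\dd w,
\end{equation*}
which is symmetric in $(\phi,\psi)$ and non-negative on the diagonal, yielding simultaneously $\cL=\cL^*$ on a suitable core and $\cL\ge 0$.

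For the nullspace I would use Boltzmann's theorem on collisional invariants: if $\cL\phi=0$ then the identity above forces $\phi+\phi_*=\phi'+\phi'_*$ almost everywhere on the support of $c$, and the lower bound in Assumption A1 guarantees that this constraint is active on a set of full measure in $\bR^3\times\bR^3\times\bS^2$. The standard classification of measurable collision invariants of the elastic transformation \eqref{cc2} then gives $\Ker\cL=\Span\{1,w_1,w_2,w_3,|w|^2\}$.

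The Fredholm property is the real technical obstacle, and I would handle it by invoking Grad's compactness theorem: under the hard cutoff Assumption A1 with $\g\in[0,1]$, the operator $K$ is compact on $L^2(M\dd w)$, and in fact relatively compact with respect to the multiplication by $\nu$. Since $\nu\cdot$ is a self-adjoint positive multiplication operator on $L^2(M\dd w)$ with essential spectrum $[\nu_{\min},\infty)$ and $\nu_{\min}=\inf_w\nu(w)>0$ by the lower bound in \eqref{cc4}, the self-adjoint operator $\cL=\nu-K$ has the same essential spectrum, supplemented only by isolated eigenvalues of finite multiplicity below it (including $0$). The standard closed-range theorem for self-adjoint operators then yields $\Img\cL=(\Ker\cL)^\perp$. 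The hard step is thus Grad's compactness result for $K$ under \eqref{cc4}; this is precisely Theorem II on p.~187 of \cite{Cerci75} which the statement invokes, and whose proof reduces the kernel of $K$ to a Hilbert--Schmidt form up to an easily-controlled velocity cutoff.
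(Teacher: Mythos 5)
The paper does not prove this statement at all: it is recalled as a classical result of Hilbert (hard spheres) and Grad (cutoff hard potentials), with an explicit pointer to Theorem I, p.~186 and Theorem II, p.~187 of \cite{Cerci75}. Your sketch is precisely the route those references take --- the splitting $\cL\phi=\nu\phi-K\phi$ with $\nu=\bar c\star M$, the symmetrized Dirichlet form giving self-adjointness and nonnegativity, the classification of collision invariants for the nullspace, and Grad's compactness of $K$ plus Weyl stability of the essential spectrum for the Fredholm property --- so there is nothing to compare beyond confirming that your outline is the standard and correct one. Two small points of care: (i) Assumption A1's lower bound $\int_{\bS^2}c\,\dd\om\ge c_*^{-1}|z|/(1+|z|)$ only yields $\nu(w)\ge\nu_{\min}>0$ after convolution with $M$, not a lower bound proportional to $(1+|w|)^{\g}$ as you claim; this costs nothing, since all you need downstream is that the essential spectrum stays away from $0$. (ii) Since $\nu$ may be unbounded, ``Fredholm'' here is to be read in the sense of closed range with finite-dimensional kernel and cokernel for the self-adjoint unbounded operator $\cL$ on its domain, which is exactly what your argument via $0$ being an isolated eigenvalue of finite multiplicity delivers.
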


\medskip
Defining
\begin{equation}\label{defA}
A(w):=w\otimes w-\tfrac13|w|^2I
\end{equation}
the traceless part of the quadratic tensor field $w\otimes w$, elementary computations show that
$$
A\bot\Ker\cL\quad\hbox{ in }L^2(Mdv)\,.
$$
Hence there exists a unique $\tilde A\in\Dom\cL$ such that 
\begin{equation}\label{defAtilde}
\cL\tilde A=A\,,\qquad\tilde A\bot\Ker\cL\,,
\end{equation}
by the Fredholm alternative applied to the Fredholm operator $\cL$.

Using that the linearized collision integral and the tensor field $A$ are equivariant under the action of the orthogonal group, one finds that the matrix field $\tilde A$ is in fact of the form 
\begin{equation}\label{defalpha}
\tilde A(w)=\a(|w|)A(w)\,.
\end{equation}
See \cite{dego} or Appendix 2 of \cite{GoB}.

In the sequel, we shall present results which are specific to the case when both $\a$ and its derivative $\a'$ are bounded. More precisely, we make the following assumption.

\medskip
\noindent
\textbf{Assumption A2}: there exists a positive constant $C$ such that
$$
|\tilde A(w)| \le C (1 + |w|^2)\,,\quad\hbox{ and }\quad|\nabla \tilde A(w)| \le C (1 + |w|^2)\,.
$$

\medskip
We recall that, in the case of Maxwell molecules, that is, for a collision kernel $c$ of the form
$$
c(z,\om)=C(|\cos(\widehat{z,\om})|)
$$
the scalar $\a$ is a positive constant. 

\subsubsection{The collision integrals $\cD$ and $\cR$ for elastic collisions}\label{sec232}


For each measurable $F$ and $f$ defined a.e. on $\bR^3$ and rapidly decaying at infinity, the dimensionless collision integrals $\cD(F,f)$ and $\cR(f,F)$ are given by the formulas
$$
\ba
\cD(F,f)(v)&=\iint_{\bR^3\times\bS^2}(F(v'')f(w'')\!-\!F(v)f(w))b(\eps v-w,\om)\,\dd w\dd\om\,,
\\
\cR(f,F)(w)&=\iint_{\bR^3\times\bS^2}(f(w'')F(v'')\!-\!f(w)F(v))b(\eps v-w,\om)\,\dd v\dd\om\,,
\ea
$$
where
\begin{equation}\label{ela1}
\ba
{}&v''\equiv v''(v,w,\om)\,:=v\,-\frac{2\eta}{1+\eta}\!\left(v-\!\frac1\eps w\!\right)\!\cdot\om\om\,,
\\
&w''\!\equiv w''(v,w,\om)\!:=w-\frac{2}{1+\eta}\,\,(\,w-\eps v)\cdot\om\om\,,
\ea
\end{equation} 
(see formula (5.10) in chapter II of \cite{Cerci75}).
 
The collision kernel $b$ is of the form
\begin{equation}\label{ela2}
b(\eps v-w,\om)=|\eps v-w|\si_{pg}(|\eps v-w|,|\cos(\widehat{\eps v-w,\om})|),
\end{equation}
where $\si_{pg}$ is the dimensionless differential cross-section of gas molecules. In other words, 
\begin{equation}\label{ela3}
\Si_{pg}(|z|)=4\pi\int_0^1\si_{pg}(|z|,\mu)\,\dd\mu\,,
\end{equation}
while
\begin{equation}\label{ela4}
\ba
\Pi_{pg}(v,\dd V\dd W)&=\iint_{\bR^3\times\bS^2}\de_{v''(v,w,\om)}\otimes\de_{w''(v,w,\om)}b(\eps v-w,\om)\dd w\dd\om\,,
\\
\Pi_{gp}(w,\dd V\dd W)\!&=\iint_{\bR^3\times\bS^2}\de_{v''(v,w,\om)}\otimes\de_{w''(v,w,\om)}b(\eps v-w,\om)\dd v\dd\om\,,
\ea
\end{equation}
where the equalities (\ref{ela4}) are to be understood in the same way as (\ref{Pigg}).

As in the case of the molecular collision kernel $c$, we assume that $b$ is a cutoff kernel associated with a hard potential, i.e. we assume that there exists $b_*>1$ and $\b^*\in[0,1]$ such that
\begin{equation}\label{ela5}
\ba
{}&0<b(z,\om)\le b_*(1+|z|)^{\b^*}\,,&&\quad\hbox{ for a.e. }(z,\om)\in\bR^3\times\bS^2\,,
\\
&\int_{\bS^2}b(z,\om)\,\dd\om\ge\frac1{b_*}\frac{|z|}{1+|z|}\,,&&\quad\hbox{ for a.e. }z\in\bR^3\,.
\ea
\end{equation}
We also assume that, for any $p>3$,
\begin{equation}
\label{colliel}
\iiint |b(\eps v - w,\omega) - b(w, \omega)| (1+ |v|^2 + |w|^2) M(w) (1+|v|^2)^{-p} d\omega dvdw = O(\eps).
\end{equation}
This assumption is satisfied as soon as the angular cutoff in the hard potential is smooth, or in the case of hard spheres collisions.

\subsubsection{An inelastic model of collision integrals $\cD$ and $\cR$}\label{sec233}


Dust particles or droplets are macroscopic objects when compared to gas molecules. This suggests using the classical models of gas surface interactions to describe the impingement of gas molecules on dust particles or 
droplets. The simplest such model of collisions has been introduced by F. Charles in \cite{FCharlesRGD08}, with a detailed discussion in section 1.3 of \cite{FCharlesPhD} and in \cite{FCharlesSDelJSeg}. We briefly recall
this model below.

First, the (dimensional) particle-molecule cross-section is
$$
S_{pg}=\pi(r_g+r_p)^2,
$$
where $r_g$ is the molecular radius and $r_p$ the radius of dust particles or droplets. In other words, we assume that the collisions between gas molecules and the particles in the dispersed phase are hard sphere collisions.
Then, the dimensionless particle-molecule cross-section is
$$
\Si_{pg}(|\eps v-w|)=1\,.
$$
The formulas for $S_{pg}$ and $\Si_{pg}$ correspond to a binary collision between two balls of radius $r_p$ and $r_g$.

Next, the measure-valued functions $\Pi_{pg}$ and $\Pi_{gp}$ are defined by the following formulas:
\begin{equation}\label{ine1}
\ba
\Pi_{pg}(v,\dd V\,\dd W)&:=K_{pg}(v,V,W)\,\dd V\dd W\,,
\\
\Pi_{gp}(w,\dd V\,\dd W)&:=K_{gp}(w,V,W)\,\dd V\dd W\,,
\ea
\end{equation} 
where\begin{equation}\label{ine2}
\ba
K_{pg}(v,V,W):&=\tfrac1{2\pi^2}\left(\tfrac{1+\eta}\eta\right)^4\b^4\eps^3\exp\left(-\tfrac12\b^2\left(\tfrac{1+\eta}\eta\right)^2\left|\eps v-\frac{\eps V+\eta W}{1+\eta}\right|^2\right)
\\
&\qquad\times\int_{\bS^2}(n\cdot(\eps V-W))_+\left(n\cdot\left(\frac{\eps V+\eta W}{1+\eta}-\eps v\right)\right)_+dn\,,
\ea
\end{equation}
\begin{equation}\label{ine3}
\ba
K_{gp}(w,V,W)&:=\tfrac1{2\pi^2}(1+\eta)^4\b^4\exp\left(-\tfrac12\b^2(1+\eta)^2\left|w-\frac{\eps V+\eta W}{1+\eta}\right|^2\right)
\\
&\qquad\times\int_{\bS^2}(n\cdot(\eps V-W))_+\left(n\cdot\left(w-\frac{\eps V+\eta W}{1+\eta}\right)\right)_+dn\,.
\ea
\end{equation}
In the formulas above, 
$$
\beta=\sqrt{\frac{m_g}{2 k_B T_{surf}}}
$$
where $k_B$ is the Boltzmann constant and $T_{surf}$ the surface temperature of the particles.


\section{Hypothesis on $\Pi_{pg}$ and $\Pi_{gp}$}\lb{S-S3}


In the sequel, we shall provide a theorem which holds for all diffusion and friction operators satisfying a few assumptions described below. We recall that $\Pi_{pg}$ and $\Pi_{gp}$ are nonnegative measure valued functions 
of the variable $v\in\bR^3$ and of the variable $w\in\bR^3$ respectively, which depend in general on $\eps$ and $\eta$ (cf. formulas (\ref{ela1}), (\ref{ela4}), and  (\ref{ine1}) -- (\ref{ine3})).  We do not systematically mention 
this dependence, unless absolutely necessary, as in Assumptions (H4)-(H5) below. In that case, we write $\Pi_{pg}^{\eps, \eta}$ and $\Pi_{gp}^{\eps, \eta}$ instead of $\Pi_{pg}$ and $\Pi_{gp}$, as already explained.

\medskip
\noindent
\textbf{Assumption (H1)}: There exists a nonnegative function $q\equiv q(r)$, such that $q(r)\le C(1+r)$ for some constant $C>0$, and such that the measure-valued functions $\Pi_{pg}$ and $\Pi_{gp}$ satisfy
$$
\int_{\bR^3}\Pi_{pg}(v,\dd V\dd W)\,\dd v=\int_{\bR^3}\Pi_{gp}(w,\dd V\dd W)\,\dd w=q(|\eps V-W|)\,\dd V\dd W\,.
$$
Observe that Assumption (H1) is consistent with the fact that the same cross section $\Si_{pg}$ appears in the last two lines of (\ref{Colli2}). In particular, (H1) implies the local conservation law of mass.

\medskip 
\noindent
\textbf{Assumption (H2)}: There exists a nonnegative function $Q\equiv Q(r)$ in $C^1(\bR_+^*)$ such that $Q(r) + |Q'(r)|\le C(1+r)$ for some constant $C>0$, and such that the measure-valued functions $\Pi_{pg}$ and $\Pi_{gp}$ 
satisfy
$$
\ba
\eps\int_{\bR^3}(v-V)\Pi_{pg}(v,\dd V\dd W)\,\dd v&=-\eta\int_{\bR^3}(w-W)\Pi_{gp}(w,\dd V\dd W)\,\dd w
\\
&=-\frac{\eta}{1+\eta}(\eps V-W)Q(|\eps V-W|)\dd V\dd W\,.
\ea
$$
This assumption implies the conservation of momentum between molecules and particles.

\medskip
\noindent
\textbf{Assumption (H3)}: There exists a constant $C>0$ such that the measure-valued function $\Pi_{pg}$ satisfies
$$
\int_{\bR^3}\left|\eps v-\frac{\eps V+\eta W}{1+\eta}\right|^2\Pi_{pg}(v,\dd V\dd W)\,\dd v\le C\,\eta^2\,(1+|\eps V-W|^2)q(|\eps V-W|)\,\,,
$$
where $q$ is the function in Assumption (H1).

\medskip
\noindent
\textbf{Assumption (H4)}:
In the limit as $(\eps,\eta)\to(0,0)$, one has $\Pi^{\eps,\eta}_{gp}(w,\cdot)\to\Pi^{0,0}_{gp}(w,\cdot)$ weakly in the sense of probability measures for a.e. $w\in\bR^3$, and the limiting measure $\Pi^{0,0}_{gp}$ satisfies the following invariance
condition:
\be\lb{InvPi00}
\cT_R\#\Pi^{0,0}_{gp}=\Pi^{0,0}_{gp}\quad\hbox{ for each }R\in O_3(\bR)\,,
\ee
where
\begin{equation}\label{defTR}
\cT_R:\,(w,V,W)\mapsto(Rw,V,RW)\,.
\end{equation} 

Besides, for each $p>3$ and each $\Phi := \Phi(w,W)$ in $C^1(\bR^3\times\bR^3)$ such that
$$
|\Phi(w,W)|+|\nabla_w\Phi(w,W)|\le C(1+|w|^2+|W|^2)M(W)
$$
for some $C>0$, one has
$$
\ba
\int_{\bR^3}(1+ |V|^2)^{-p}\left|\iint_{\bR^3\times\bR^3}\Phi(w,W)(\Pi^{\eps,\eta}_{gp}(w,\dd V\dd W)\,\dd w-\Pi^{0,0}_{gp}(w,\dd V\dd W)\,\dd w)\right|
\\
=O(\eps+\eta)
\ea
$$
as $(\eps,\eta)\to 0$.

\medskip
\noindent
\textbf{Assumption (H5)}: There exists a positive constant $C>0$ (independent of $\eps,\eta$) such that, for all $(\eps,\eta)$ close to $(0,0)$ and for all $h \in L^2(M(w)\dd w)$, 
$$
\ba  
\iiint_{\bR^3\times\bR^3\times\bR^3}\frac{(1+|W|^2)}{(1+|V|^2)^p}M(W)|h(W)|(1+|w|^2)\Pi^{\eps,\eta}_{gp}(w,\dd V\dd W)\,\dd w&
\\
\le C\|h\|_{L^2(M(w)\dd w)}&\,.
\ea
$$

\medskip
Assumptions (H1)-(H3) and (H5) are the same as the assumptions introduced in section 3 of \cite{BDGR}. Assumption (H4) differs from its counterpart in \cite{BDGR}: while the asymptotic invariance condition (\ref{InvPi00}) is the same as in
assumption (H4) in section 3 of \cite{BDGR}, the second part of (H4) in the present paper postulates a convergence rate $O(\eps+\eta)$, whereas the second part of assumption (H4) in \cite{BDGR} only requires that the same quantity should
vanish in the limit as $(\eps,\eta)\to 0$. Besides, assumption (H4) in \cite{BDGR} involved some additional decay condition on $\Pi_{gp}^{0,0}$ which is useless here.

\bigskip
We recall that the elastic and inelastic models previously introduced (in subsections \ref{sec232} and \ref{sec233} resp.) satisfy the assumptions (H1)-(H3) and (H5): see section 3 of \cite{BDGR} for a detailed verification. It remains to verify
(H4), with the modified asymptotic condition used in the present work. These verifications are summarized in the following propositions.

We begin with the elastic collision model.

\medskip
\begin{proposition}\label{hypelast}
We consider a cross-section $b$ of the form (\ref{ela2}) satisfying (\ref{ela5})-(\ref{colliel}), and the quantities $\Sigma_{pg}$, $\Pi_{pg}$ and $\Pi_{gp}$ defined by (\ref{ela1}), (\ref{ela3}) and (\ref{ela4}). Then, assumptions (H1) -- (H5) are 
satisfied, with 
\begin{equation}\label{H11}
q(|\eps v-w|)=4\pi\int_0^1 |\eps v-w|\,\sigma_{pg}(|\eps v-w|,\mu)d\mu , 
\end{equation}
\begin{equation}\label{H12} 
Q(|\eps v-w|)=8\pi\int_0^1 |\eps v-w|\,\sigma_{pg}(|\eps v-w|,\mu)\mu^2d\mu. 
\end{equation}
\end{proposition}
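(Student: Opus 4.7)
The verification of (H1), (H2), (H3), and (H5) for the elastic model with $q$ and $Q$ as in (\ref{H11})-(\ref{H12}) is carried out in detail in Section 3 of \cite{BDGR}; since the definitions (\ref{ela1})-(\ref{ela4}) are unchanged here, I would simply invoke that reference. All the real work is in verifying (H4), whose quantitative second part is strengthened relative to its counterpart in \cite{BDGR}.

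The first step is to put $\iint\Phi(w,W)\Pi^{\eps,\eta}_{gp}(w,\dd V\dd W)\dd w$ in a form with an explicit density in $V$. I would resolve the two $\de$-factors in (\ref{ela4}) by the successive changes of variables $v\mapsto V=v''(v,w,\omega)$ and $w\mapsto W=w''(v,w,\omega)$ (for fixed $\omega$), whose Jacobians are reciprocals since the full elastic map $(v,w)\mapsto(v'',w'')$ has Jacobian $1$. Solving the system $V=v''$, $W=w''$ yields
$$
w=W-\tfrac{2}{1+\eta}(W-\eps V)\cdot\omega\,\omega\qquad\text{and}\qquad \eps v-w=R_\omega(\eps V-W),
$$
where $R_\omega z:=z-2(z\cdot\omega)\omega$; in particular the evenness and $R_\omega$-invariance of $b$ simplify the kernel to $b(W-\eps V,\omega)$. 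One thus obtains $\iint\Phi(w,W)\Pi^{\eps,\eta}_{gp}(w,\dd V\dd W)\dd w=\rho_{\eps,\eta}(V)\,\dd V$ with
$$
\rho_{\eps,\eta}(V):=\iint\Phi(w_{\eps,\eta}(V,W,\omega),W)\,b(W-\eps V,\omega)\,\dd\omega\dd W,
$$
where $w_{\eps,\eta}(V,W,\omega):=W-\tfrac{2}{1+\eta}(W-\eps V)\cdot\omega\,\omega\to R_\omega W$ as $(\eps,\eta)\to 0$. This identifies $\Pi^{0,0}_{gp}(w,\dd V\dd W)=\dd V\otimes\int_{\bS^2}b(w,\omega)\de_{R_\omega w}(\dd W)\dd\omega$, and the invariance (\ref{InvPi00}) under $R\in O_3(\bR)$ follows from this form by the substitution $w\mapsto R^{-1}w$, $\omega\mapsto R^{-1}\omega$, combined with $b(Rz,R\omega)=b(z,\omega)$, the rotational invariance of $\dd\omega$, and the identity $R R_\omega R^{-1}=R_{R\omega}$.

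The quantitative rate is the main task. I would split
$$
\ba
\rho_{\eps,\eta}(V)-\rho_{0,0}(V)=&\iint[\Phi(w_{\eps,\eta},W)-\Phi(R_\omega W,W)]\,b(W-\eps V,\omega)\,\dd\omega\dd W\\
&+\iint\Phi(R_\omega W,W)\,[b(W-\eps V,\omega)-b(W,\omega)]\,\dd\omega\dd W.
\ea
$$
The first term is controlled by a first-order Taylor expansion in the $w$-slot of $\Phi$: combining the estimate $|w_{\eps,\eta}-R_\omega W|\le C(\eps|V|+\eta|W|)$, the hypothesis $|\nabla_w\Phi|\le C(1+|w|^2+|W|^2)M(W)$, and the growth bound (\ref{ela5}) on $b$ gives, after integration in $W$, $\omega$ (Gaussian decay from $M(W)$) and in $V$ (against the weight $(1+|V|^2)^{-p}$ with $p>3$), a bound of $O(\eps+\eta)$. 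The second term is brought directly into the form of (\ref{colliel}): using $b(W-\eps V,\omega)=b(\eps V-W,\omega)$ (evenness of $b$ in its first argument) and majorizing $|\Phi(R_\omega W,W)|\le C(1+|V|^2+|W|^2)M(W)$, this piece is exactly the integral in (\ref{colliel}) after relabeling $v\leftrightarrow V$, $w\leftrightarrow W$, and is therefore $O(\eps)$.

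The main technical subtlety is that the literal inversion formula reads $v-V=\tfrac{2\eta}{(1-\eta)\eps}(\eps V-w)\cdot\omega\,\omega$ and carries an $\eta/\eps$ factor that superficially threatens blow-up if $\eps\to 0$ with $\eta$ not correspondingly small. This is sidestepped by the observation that $v$ enters the integrand only through the combination $\eps v-w$, which after the joint change of variables equals $R_\omega(\eps V-W)$ and so has the harmless magnitude $|\eps V-W|$. Once this cancellation is exploited the remaining calculations reduce to standard Gaussian-polynomial integrals, and in particular no side relation between $\eps$ and $\eta$ is needed to close the estimate.
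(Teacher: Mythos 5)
Your proof is correct and follows essentially the same route as the paper's: both defer (H1)--(H3), (H5) and the qualitative half of (H4) to \cite{BDGR}, pass to the variables $(V,W)$ via the measure-preserving elastic collision map so that the kernel becomes $b(W-\eps V,\om)$ and the $w$-slot of $\Phi$ becomes $W-\tfrac{2}{1+\eta}(W-\eps V)\cdot\om\,\om$, and then split the error into a $b$-increment controlled by (\ref{colliel}) plus a $\Phi$-increment of size $O(\eps|V|+\eta|W|)$ controlled by the mean value theorem and the growth bound on $\nabla_w\Phi$. The only cosmetic difference is that the paper evaluates the kernel at $b(W,\om)$ in the mean-value term, so that the integrand grows like $|V|^3$ against the weight $(1+|V|^2)^{-p}$ (exactly what $p>3$ accommodates), whereas you keep $b(W-\eps V,\om)$ there; you should make the same substitution (absorbing the resulting extra $b$-difference into the (\ref{colliel}) piece) to avoid implicitly needing $p>7/2$ when $\b^*=1$.
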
 

\begin{proof} 
The reader is referred to the proof of Proposition 1 in section 3 of \cite{BDGR}. We present only the part of the argument concerning the second condition in Assumption (H4), which is new. For each $p>3$ and each $\Phi:=\Phi(w,W)$ in $C^1(\bR^3\times\bR^3)$ such that
$$
|\Phi(w,W)|+|\nabla_w\Phi(w,W)|\le C(1+|w|^2+|W|^2)M(W)\,,
$$
we deduce from the mean value theorem that
$$
\ba
\int_{\bR^3}(1+ |V|^2)^{-p}\left|\iint_{\bR^3\times\bR^3}\Phi(w,W)(\Pi^{\eps,\eta}_{gp}(w,\dd V\dd W)\,\dd w-\Pi^{0,0}_{gp}(w,\dd V\dd W)\,\dd w)\right| 
\\
=\int_{\bR^3}(1+ |v|^2)^{-p}\left|\iint_{\bR^3\times\bS^2}(\Phi(w'',w)b(\eps v-w,\om)-\Phi(S_\om w)b(w,\om))\,\dd w\dd\om\right|\,\dd v
\\
\le  C\iiint_{\bR^3\times\bR^3\times\bS^2}\frac{(1+|w|^2+|v|^2)}{(1+ |v|^2)^p}M(w)|b(\eps v-w,\om)-b(w,\om)|\,\dd v\dd w\dd\om
\\
+C\iiint_{\bR^3\times\bR^3\times\bS^2}\!\!\frac{(\eta|w|\!+\!\eps|v|)}{(1\!+\!|v|^2)^p}\phi_{\eps,\eta}(w,\om,\th)b(w,\omega)\,\dd v\dd w\dd\om
\\
\le C(\eta+\eps)+C'(\eta+\eps)\iint_{\bR^3\times\bR^3}\frac{(1\!+\!|w|)(|w|\!+\!|v|)(1\!+\!|w|^2\!+\!|v|^2)}{(1\!+\!|v|^2)^p}M(w)\,\dd v\dd w 
\\
\le C''(\eta+\eps)\,. 
\ea
$$
where
$$
S_\om w:=w-2 (w\cdot\om)\om\,,
$$
and
$$
\ba
\phi_{\eps,\eta}(w,\om,\th):=\sup_{0<\th<1}|\nabla_w\Phi(S_\om w\!+\!\tfrac{2\theta}{1+\eta}(\eta w\!+\!\eps v)\!\cdot\!\om\om,w)|
\\
\le C(1+2|w|^2+4|\eta w+\eps v|^2+|w|^2)M(w)
\\
\le C(1+11|w|^2+8|v|^2)M(w)
\ea
$$
for all $0<\eps,\eta<1$.
\end{proof}

\medskip
Next we check assumptions (H1)-(H5) on the inelastic collision model.

\begin{proposition}\label{hypinelast}
Consider the measure-valued functions $\Pi_{pg}$ and $\Pi_{gp}$ defined in (\ref{ine1})-(\ref{ine3}). Then, assumptions (H1)-(H5) are satisfied, with 
$$ 
q(|\eps v-w|)=|\eps v-w|
$$
and
$$ 
Q(|\eps v-w|)=\frac{\sqrt{2\pi}}{3\b}+|\eps v-w|\,.
$$
\end{proposition}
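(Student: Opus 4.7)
The plan is to verify assumptions (H1)--(H5) by direct computation from the explicit expressions (\ref{ine2})--(\ref{ine3}). Both kernels $K_{pg}$ and $K_{gp}$ factor as a Gaussian in the integration variable, centered at $\tfrac{\eps V+\eta W}{1+\eta}$ and suitably rescaled, times an angular integral over $\bS^2$ of positive parts of linear forms. All integrals therefore reduce to elementary Gaussian moments together with the two sphere identities $\int_{\bS^2}(n\cdot z)_+\,\dd n=\pi|z|$ and $\int_{\bS^2}(n\cdot z)_+\,n\,\dd n=\tfrac{2\pi}{3}z$. Since (H1)--(H3) and (H5) were already established in \cite{BDGR} up to the identification of $q$ and $Q$, the heart of the argument is to carry out that identification and, more substantively, to verify the new quantitative form of (H4).

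For (H1), I would perform the substitution $z=\beta\tfrac{1+\eta}{\eta}\bigl(\eps v-\tfrac{\eps V+\eta W}{1+\eta}\bigr)$ in $\int_{\bR^3}K_{pg}(v,V,W)\,\dd v$. All $\eps,\eta,\beta$ prefactors cancel exactly, leaving $\tfrac{1}{2\pi^2}\int e^{-|z|^2/2}(-n\cdot z)_+\,\dd z\cdot\int_{\bS^2}(n\cdot(\eps V-W))_+\,\dd n$; the Gaussian factor equals $2\pi$ (one-dimensional first absolute moment of $-t$ on $t<0$, times a two-dimensional Gaussian), and the spherical factor equals $\pi|\eps V-W|$, giving $q(r)=r$. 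The companion substitution $z'=\beta(1+\eta)\bigl(w-\tfrac{\eps V+\eta W}{1+\eta}\bigr)$ in $\int K_{gp}\,\dd w$ produces the same value.

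For (H2), applying the same substitution to $\eps\int(v-V)K_{pg}\,\dd v$ splits the integrand into a constant-in-$z$ piece, which recycles the (H1) computation and contributes $-\tfrac{\eta}{1+\eta}(\eps V-W)|\eps V-W|$, and a linear-in-$z$ piece. For the latter I would compute $\int z\,e^{-|z|^2/2}(-n\cdot z)_+\,\dd z=-\pi\sqrt{2\pi}\,n$ by cylindrical coordinates with axis $n$, then use $\int_{\bS^2}(n\cdot(\eps V-W))_+\,n\,\dd n=\tfrac{2\pi}{3}(\eps V-W)$, contributing an additional term $-\tfrac{\eta\sqrt{2\pi}}{3\beta(1+\eta)}(\eps V-W)$. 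Summing yields $Q(r)=\tfrac{\sqrt{2\pi}}{3\beta}+r$; the $K_{gp}$ identity then follows from the same computation applied to $-\eta\int(w-W)K_{gp}\,\dd w$ after substitution $z'$. Assumption (H3) is immediate from the Gaussian variance $\tfrac{\eta^2}{\beta^2(1+\eta)^2}$ of the $K_{pg}$ factor, and (H5) follows by first computing $\int(1+|w|^2)K_{gp}\,\dd w\le C(1+|V|^2+|W|^2)|\eps V-W|$ via Gaussian moments, then using Cauchy--Schwarz with the weight $(1+|V|^2)^{-p}M(W)$ absorbing the polynomial growth in $V,W$ once $p>3$.

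The main obstacle, and the only genuinely new piece relative to \cite{BDGR}, is the $O(\eps+\eta)$ rate in (H4). Formally setting $\eps=\eta=0$ in $K_{gp}$ produces the limit kernel $K_{gp}^{0,0}(w,V,W)=\tfrac{\beta^4}{2\pi^2}e^{-\beta^2|w|^2/2}\int_{\bS^2}(-n\cdot W)_+(n\cdot w)_+\,\dd n$, which is independent of $V$ and depends on $(w,W)$ only through rotationally equivariant quantities, so the invariance (\ref{InvPi00}) under $\cT_R:(w,V,W)\mapsto(Rw,V,RW)$ is immediate by the change of variable $n\mapsto Rn$ in the spherical integral. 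For the quantitative rate, I would Taylor expand the $C^\infty$ ingredients --- the exponent $-\tfrac12\beta^2(1+\eta)^2|w-\tfrac{\eps V+\eta W}{1+\eta}|^2$ and the prefactor $(1+\eta)^4$ --- around $(\eps,\eta)=(0,0)$, producing remainders of size $(\eps+\eta)\cdot P(|w|,|V|,|W|)$ with $P$ polynomial. The delicate point is that the positive-part factors $(n\cdot(\eps V-W))_+$ and $(n\cdot(w-\tfrac{\eps V+\eta W}{1+\eta}))_+$ are only Lipschitz; here I would use $|(a)_+-(b)_+|\le|a-b|$ to obtain $|(n\cdot(\eps V-W))_+-(-n\cdot W)_+|\le\eps|V|$ and similarly for the $w$-factor. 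Integrating these remainders against $\Phi(w,W)$ with the weight $(1+|V|^2)^{-p}$ then produces an estimate of the form $(\eps+\eta)\iint\tfrac{(1+|V|)(1+|w|^2+|W|^2)M(W)}{(1+|V|^2)^p}\,\dd V\dd w\dd W$, which is finite and $O(\eps+\eta)$ uniformly once $p>3$.
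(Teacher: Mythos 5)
Your identification of $q$ and $Q$ via the Gaussian substitution and the two sphere identities is correct, and it reproduces exactly the computation that the paper delegates to \cite{BDGR}; the same is true of your verification of (H1)--(H3), (H5) and of the rotational invariance of $\Pi^{0,0}_{gp}$. The one place where your argument does not close is precisely the only genuinely new step, namely the $O(\eps+\eta)$ rate in (H4). Taylor-expanding the exponent $-\tfrac12\beta^2(1+\eta)^2\bigl|w-\tfrac{\eps V+\eta W}{1+\eta}\bigr|^2$ \emph{in place} forces you to compare two Gaussians whose centres differ by roughly $\eps V$; the resulting remainder carries a factor of order $(\eps+\eta)\bigl(|w|^2+|w|(|V|+|W|)+|V|^2+|W|^2\bigr)$, and after integrating in $w$ against the shifted Gaussian and against the two angular factors (each of which contributes a further power of $\eps|V|$), the total polynomial growth in $|V|$ is at least $|V|^4$. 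The weight $(1+|V|^2)^{-p}$ absorbs $|V|^k$ only for $k<2p-3$, so for $p$ close to $3$ your estimate diverges, whereas (H4) must hold for \emph{each} $p>3$ (and the $p$ fed into it by Proposition \ref{last} is whatever $p>3$ appears in hypothesis (a) of Theorem \ref{theor}). Note also that your final displayed bound has dropped the Gaussian weight in $w$ altogether, so its $w$-integral is infinite as written, and it records the $V$-growth as $(1+|V|)$, which undercounts even the contribution of $\sup|\nabla_w\Phi|$ at the intermediate point, already of order $|V|^2$.

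The paper's proof avoids all of this by first performing the substitution $z=(1+\eta)w-\eps V-\eta W$, which turns the Gaussian factor into exactly $e^{-\frac12\beta^2|z|^2}$, independent of $\eps,\eta,V,W$: there is then no Gaussian difference to estimate at all, and the entire $(\eps,\eta)$-dependence is confined to the argument of $\Phi$ (handled by the mean value theorem, contributing $(\eta|z|+\eta|W|+\eps|V|)$ times a quantity of order $1+|z|^2+|V|^2+|W|^2$, hence at most $|V|^3$) and to the Lipschitz angular factor $J_\eps(V,W,z)-J(W,z)$ (contributing $\eps|V||z|$ times $\Phi$, again at most $|V|^3$). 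Since $\int|V|^3(1+|V|^2)^{-p}\,\dd V<\infty$ exactly when $p>3$, this change of variables is what makes the estimate close with the stated exponent; you should carry it out before any expansion. The rest of your outline can then be kept as is.
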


\begin{proof}
Here again, we present only the argument justifying the second part of (H4) which is new, and refer to Proposition 2 in section 3 of \cite{BDGR} for a complete proof of the remaining statements.

With the substitution $w \mapsto z= (1+\eta) w - \eps V - \eta W$, 
$$ 
\ba 
\iint_{\bR^3\times\bR^3}\Phi(w,W)(1+\eta)^4\exp\left(-\frac12\beta^2(1+\eta)^2\left|w-\frac{\eps V+\eta W}{1+\eta}\right|^2\right)
\\
\times\int_{\bS^2}((\eps V - W)\cdot n)_+\left(\left(\frac{\eps V+\eta W}{1+\eta}-w\right)\cdot n\right)_+\,\dd n\dd w\dd W 
\\
=\iint_{\bR^3\times\bR^3}\Phi\left(\tfrac{z+\eps V+\eta W}{1+\eta},W\right)e^{-\tfrac12\beta^2|z|^2}J_\eps(V,W)\dd z\dd W\,,
\ea
$$
where
$$
J_\eps(V,W,z):=\int_{\bS^2}((\eps V-W)\cdot n)_+(n \cdot z)_+\,\dd n\,.
$$
We shall also denote
$$
J(W,z):=J_\eps(0,W,z)\,,\qquad\hat J(W,z)=J(W,z)+J(-W,z)\,.
$$

Then, for each $p>3$ and each $\Phi:=\Phi(w,W)$ in $C^1(\bR^3\times\bR^3)$ satisfying
$$
|\Phi(w,W)|+|\nabla_w\Phi(w,W)|\le C(1+|w|^2+|W|^2)M(W)\,,
$$
one has
$$ 
\ba
\int_{\bR^3}(1+ |V|^2)^{-p} \left|\iint_{\bR^3\times\bR^3}\Phi(w,W)(\Pi^{\eps,\eta}_{gp}(w,\dd V\dd W)\,\dd w-\Pi^{0,0}_{gp}(w,\dd V\dd W)\,\dd w)\right|&
\\
\le C\iiint\frac{e^{-\frac12\beta^2|z|^2}}{(1+|V|^2)^p}|\Phi(\tfrac{z+\eps V+\eta W}{1+\eta},W)J_\eps(V,W,z)-\Phi(z,W)J(W,z)|\,\dd z\dd V\dd W&
\\
\le C\eps\iiint\frac{e^{-\frac12\beta^2|z|^2}}{(1+|V|^2)^p}|\Phi(\tfrac{z+\eps V+\eta W}{1+\eta},W)|\hat J(V,z)\,\dd z\dd V\dd W& 
\\
+C\iiint\frac{e^{-\frac12\beta^2|z|^2}}{(1+|V|^2)^p}|\Phi(\tfrac{z+\eps V+\eta W}{1+\eta},W)-\Phi(z,W)|J(W,z)\,\dd z\dd V\dd W&\,.
\ea
$$
By the mean value theorem
$$
\ba
|\Phi(\tfrac{z+\eps V+\eta W}{1+\eta},W)-\Phi(z,W)|\le|\eps V+\eta(W-z)|\sup_{0<\th<1}|\grad_w\Phi(z+\th\tfrac{\eps V+\eta(W-z)}{1+\eta})|&
\\
\le|\eps V+\eta(W-z)|(1+(|z|+|\eps V|+\eta|W-z|)^2+|W|^2)M(W)&
\\
\le(\eta|z|+\eta|W|+\eps|V|)(1+6|z|^2+3|V|^2+7|W|^2)M(W)&\,,
\ea
$$
while
$$
\ba
|\Phi(\tfrac{z+\eps V+\eta W}{1+\eta},W)|\le C(1+|z+\eps V+\eta W|^2+|W|^2)M(W)&
\\
\le C(1+3|z|^2+3|V|^2+4|W|^2)M(W)&\,,
\ea
$$
if $0<\eps,\eta<1$. Thus
$$ 
\ba
\int_{\bR^3}(1+ |V|^2)^{-p} \left|\iint_{\bR^3\times\bR^3}\Phi(w,W)(\Pi^{\eps,\eta}_{gp}(w,\dd V\dd W)\,\dd w-\Pi^{0,0}_{gp}(w,\dd V\dd W)\,\dd w)\right|&
\\
\le 4C\eps\iiint\frac{e^{-\frac12\beta^2|z|^2}}{(1+|V|^2)^p}[[z,V,W]]M(W)\hat J(V,z)\,\dd z\dd V\dd W&
\\
+7C\iiint\frac{e^{-\frac12\beta^2|z|^2}}{(1+|V|^2)^p}(\eta|z|+\eta|W|+\eps|V|)[[z,V,W]]M(W)J(W,z)\,\dd z\dd V\dd W&
\\
\le 4C\eps\iiint\frac{e^{-\frac12\beta^2|z|^2}}{(1+|V|^2)^p}[[z,V,W]]M(W)|V||z|\,\dd z\dd V\dd W&
\\
+7\sqrt{3}C\max(\eps,\eta)\iiint\frac{e^{-\frac12\beta^2|z|^2}}{(1+|V|^2)^p}[[z,V,W]]^{3/2}M(W)J(W,z)\,\dd z\dd V\dd W&
\\
\le C'(\eps+\eta)&\,.
\ea
$$
We have denoted
$$
[[z,V,W]]:=1+|z|^2+|V|^2+|W|^2
$$
and used the Cauchy-Schwarz inequality
$$
\eta|z|+\eta|W|+\eps|V|\le\sqrt{3}\max(\eps,\eta)(|z|^2+|W|^2+|V|^2)^{1/2}\,.
$$
\end{proof}


\section{Passing to the limit}\lb{S-S4}


\subsection{Statement of the main result}


We now consider a sequence of solutions $f_n\equiv f_n(t,x,w)\ge 0$, and $F_n\equiv F_n(t,x,v)\ge 0$ to the system of kinetic-fluid equations (\ref{BoltzSysSc2}), with $\eps,\eta,\mu$ replaced with sequences $\eps_n,\eta_n,\mu_n\to 0$ 
respectively. Thus
\begin{equation}\label{kifu}
\ba
{}&\d_tF_n+v\cdot\grad_xF_n=\frac1{\eta_n}\cD(F_n,f_n),
\\
&\d_tf_n+\frac1{\eps_n}w\cdot\grad_xf_n=\frac{1}{\mu_n}\cR(f_n,F_n)+\frac{\mu_n}{\eps_n^2}\cC(f_n),
\ea
\end{equation}
where $\cC,\cD$ and $\cR$ are defined by (\ref{cc1}), (\ref{cc3}), (\ref{newd}) and (\ref{newr}).

Our main result is stated below.

\begin{theorem} \label{theor}
Let $g_n\equiv g_n(t,x,w)$ and $F_n\equiv F_n(t,x,v)\ge 0$ be sequences of smooth (at least $C^1$) functions. Assume that $F_n$ and $f_n$ defined by 
\begin{equation}\label{kifu2}
f_n(t,x,w)=M(w)(1+\eps_n g_n(t,x,w)),
\end{equation} 
where $M$ is given by (\ref{maxw}), are solutions to the system (\ref{kifu}), where $\cC$, $\cD$ and $\cR$ are defined by (\ref{cc1})-(\ref{cc3}) and (\ref{newd}) and (\ref{newr}). We assume moreover that $\Pi_{pg}^{\eps_n,\eta_n}$ and 
$\Pi_{gp}^{\eps_n,\eta_n}$ in (\ref{newd})-(\ref{newr}) satisfy assumptions (H1)-(H5) (with $\Si_{pg}$ defined by (\ref{Colli2}) in accordance with Assumption (H1)). We also assume that the molecular interaction verifies assumptions (A1) 
and (A2).

\smallskip
Assume that 
$$
\eps_n\to 0\,,\quad\eta_n/\eps_n^2\to 0\,,\quad\eps_n/\mu_n^2\to 0\,,\quad\mu_n\to 0\,,
$$
that 
$$
F_n\wto F\quad\hbox{ in }L^\infty_{loc}(\bR_+^*\times\bR^3\times\bR^3)\hbox{ weak-*}\,,
$$ 
and that
$$
g_n\wto g\hbox{ in }L^2_{loc}(\bR_+^* \times \bR^3 \times \bR^3)\hbox{ weak.}
$$
(a) Assume that $F_n$ decays sufficiently fast, uniformly in $n$, in the velocity variable; in other words assume that, for some $p>3$,
$$
\sup_{n\ge 1}\sup_{t,|x|\le R}\sup_{v\in\bR^3}(1+|v|^2)^p\,F_n(t,x,v)<\infty
$$
for all $R>0$. 

\noindent
(b) Assume that, for some $q>1$,
$$
\sup_{t,|x|<R}\int_{\bR^3}(1+|w|^2)^q\,M(w)\,g_n^2(t,x,w)\,\dd w<\infty
$$
for all $R>0$.

\smallskip
Then there exist $L^{\infty}$ functions $\rho\equiv\rho(t,x)\in\bR$, $\th\equiv\th(t,x)\in\bR$ and a velocity field $u\equiv u(t,x)\in\bR^3$ s.t. for a.e $t,x \in \bR_+^* \times \bR^3$,
\begin{equation} \label{eqr}
g(t,x,w)=\rho(t,x)+u(t,x)\cdot w+\th(t,x)\tfrac12(|w|^2-3)\,,
\end{equation}
while $u,F$ satisfies the Vlasov-Stokes system
\begin{equation} \label{VNS}
\left\{
\ba
{}&\d_tF+v\cdot\grad_xF=\ka\Div_v((v-u)F),
\\	\\
&\Div_xu=0,
\\	\\
&-\nu\Dlt_xu+\grad_xp=\ka\int_{\bR^3}(v-u)F\,\dd v,
\ea
\right.
\end{equation}
in the sense of distributions, with 
\begin{equation} \label{nuka}
\nu:=\tfrac1{10}\int\tilde A:\cL\tilde A M\,\dd w>0\,,\quad\ka:=\tfrac13\int Q(|w|)|w|^2M\,\dd w>0,
\end{equation}
where $Q$ is defined in assumption (H2), and $\tilde A$, $\cL$ are defined by (\ref{defAtilde}), (\ref{defL}).
\end{theorem}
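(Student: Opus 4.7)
The plan is to adapt the scheme of \cite{BDGR}, inserting the ansatz $f_n=M(1+\eps_n g_n)$ into \eqref{kifu} and carefully tracking the effects of the new small parameter $\mu_n$. Dividing the gas equation by $M$ I obtain
\begin{equation*}
\eps_n\,\d_t g_n+w\cdot\grad_x g_n+\frac{\mu_n}{\eps_n}\cL g_n=\frac{1}{\mu_n M}\cR(f_n,F_n)+\mu_n\, q_n,
\end{equation*}
where $q_n$ gathers the quadratic remainder of $\cC$. Since $\eps_n\ll\mu_n^2\ll\mu_n$ gives $\mu_n/\eps_n\to\infty$, the dominant balance forces $\cL g=0$; hence $g\in\Ker\cL=\Span\{1,w_1,w_2,w_3,|w|^2\}$, which is precisely the decomposition \eqref{eqr} for some fields $\rho$, $u$, $\th$.

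Next I pass to the limit in the Vlasov-type equation for $F_n$. Using the bilinearity of $\cD$ I split
\begin{equation*}
\tfrac{1}{\eta_n}\cD(F_n,f_n)=\tfrac{1}{\eta_n}\cD(F_n,M)+\tfrac{\eps_n}{\eta_n}\cD(F_n,Mg_n),
\end{equation*}
and proceed exactly as in Section~4 of \cite{BDGR}. Assumption~(H4) combined with the pointwise decay in hypothesis~(a) makes the first term converge to $\ka\Div_v(vF)$ in the sense of distributions, while only the $u\cdot w$ component of $g$ from \eqref{eqr} survives the parity argument in the second and yields $-\ka\Div_v(uF)$. The scaling $\eta_n/\eps_n^2\to 0$ is used here to discard the quadratic-in-$g_n$ corrections that would otherwise leak into the drag coefficient.

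On the fluid side I take the $\{1,w,\tfrac12(|w|^2-3)\}$ moments of the $g_n$ equation against $M\,\dd w$. The mass balance reduces to $\eps_n\d_t\rho_n+\Div_x u_n=o(1)$, whose weak limit yields $\Div_x u=0$. The momentum balance reads, schematically,
\begin{equation*}
\eps_n\,\d_t u_n+\grad_x(\rho_n+\th_n)+\Div_x\!\int\! A(w)\,g_n\,M\,\dd w\,=\,\frac{1}{\mu_n}\int\! w\,\frac{\cR(f_n,F_n)}{M}\,\dd w,
\end{equation*}
and by Assumption~(H2) the right-hand side equals $-\eta_n^{-1}\int v\,\cD(F_n,f_n)\,\dd v$, which converges to $\ka\int(v-u)F\,\dd v$ by the preceding step. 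I extract the viscous stress via Chapman--Enskog: writing $g_n=g+(\eps_n/\mu_n)h_n$ with $h_n$ bounded in a weighted $L^2$, and using $\cL\tilde A=A$ together with the self-adjointness of $\cL$, the tensor $\int A g_n\,M\,\dd w$ converges to the Newtonian stress $\nu(\grad u+(\grad u)^T)$ with $\nu$ as in \eqref{nuka}, the scalar $\grad_x(\rho+\th)$ being absorbed into the pressure $p$.

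The main obstacle, and the key departure from \cite{BDGR}, will be to show that both the time-derivative $\eps_n\d_t u_n$ and the putative convective term $\Div_x\!\int w\otimes w\,(g_n-g)\,M\,\dd w$ vanish under the joint hypothesis $\eps_n/\mu_n^2\to 0$: it is exactly this scaling that kills any Reynolds-number-type coupling and produces the \emph{stationary} Stokes equation in place of Navier--Stokes. A secondary technical hurdle is that closing these estimates requires the quantitative $O(\eps_n+\eta_n)$ convergence rate imposed in Assumption~(H4) (stronger than its counterpart in \cite{BDGR}), since the divergent prefactor $1/\mu_n$ multiplying $\cR$ amplifies any $o(1)$ defect in~(H4).
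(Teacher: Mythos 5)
Your overall strategy matches the paper's (linearize $\cC$ about $M$, show $\cL g=0$, recover the fluid equations from moments of the $g_n$-equation, and use the quantitative rate in (H4) to control the friction flux $\mu_n^{-1}\int\tilde A\,\cR\,\dd w$ --- you correctly single out that last point as where the strengthened (H4) is indispensable). But there is a genuine normalization error in your momentum balance which, as written, does not produce the Stokes equation. In your display
$$
\eps_n\,\d_t u_n+\grad_x(\rho_n+\th_n)+\Div_x\la A(w)g_n\ra=\tfrac1{\mu_n}\la wM^{-1}\cR(f_n,F_n)\ra\,,
$$
every term except the pressure gradient tends to zero: $\la A(w)g_n\ra\wto\la A(w)g\ra=0$ because $g\in\Ker\cL$ while $A\perp\Ker\cL$ (the Newtonian stress is carried by $\frac{\mu_n}{\eps_n}\la A(w)g_n\ra$, not by $\la A(w)g_n\ra$); and the right-hand side is $\frac{\eps_n}{\mu_n}\cdot\frac1{\eps_n}\la wM^{-1}\cR\ra\to 0\cdot\ka\int(v-u)F\,\dd v=0$, since the drag converges at the scale $1/\eps_n$, not $1/\mu_n$. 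Relatedly, your identity ``$\frac1{\mu_n}\int w\,\cR\,\dd w=-\eta_n^{-1}\int v\,\cD\,\dd v$'' is off by the factor $\eps_n/\mu_n$: assumption (H2) gives $\eps_n\int v\,\cD\,\dd v=-\eta_n\int w\,\cR\,\dd w$. So your limit equation degenerates to the Boussinesq relation $\grad_x(\rho+\th)=0$. The fix --- and what the paper does --- is to test the momentum law against divergence-free $\xi$ and multiply by $\mu_n/\eps_n$: then $\mu_n\d_t\la wg_n\ra\to 0$ trivially, the pressure gradient drops out, the right-hand side becomes $\frac1{\eps_n}\la wM^{-1}\cR\ra\to\ka\int(v-u)F\,\dd v$, and the stress is evaluated via $\frac{\mu_n}{\eps_n}\la A g_n\ra=\la\tilde A\,\frac{\mu_n}{\eps_n}\cL g_n\ra$, substituting for $\frac{\mu_n}{\eps_n}\cL g_n$ from the kinetic equation itself. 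This also avoids your decomposition $g_n=g+(\eps_n/\mu_n)h_n$ with $h_n$ bounded in weighted $L^2$, which is asserted but does not follow from the stated hypotheses.

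Two smaller points. First, you misattribute the role of $\eps_n/\mu_n^2\to 0$: at the Stokes normalization there is no convective term left to kill (it is automatically of lower order once the equation is multiplied by $\mu_n/\eps_n$), and the time derivative vanishes using only $\mu_n\to 0$. The condition $\eps_n/\mu_n^2\to 0$ is used earlier, in the proof that $\cL g=0$: writing $\cL g_n=\frac{\eps_n}{\mu_n^2}M^{-1}\cR+\eps_n\cQ(g_n)-\frac{\eps_n^2}{\mu_n}\d_tg_n-\frac{\eps_n}{\mu_n}w\cdot\grad_xg_n$, the only available bound on the friction term (via (H1) and hypotheses (a) and (b)) is $O(1)$, so the prefactor $\eps_n/\mu_n^2$ is exactly what makes it vanish; your ``dominant balance'' assertion uses this silently. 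Second, in your splitting $\frac1{\eta_n}\cD(F_n,f_n)=\frac1{\eta_n}\cD(F_n,M)+\frac{\eps_n}{\eta_n}\cD(F_n,Mg_n)$ one has $\eps_n/\eta_n\to\infty$, so the second term is not obviously a correction; it is the cancellations encoded in (H2) and (H3) that make both pieces finite, as in Propositions 4 and 5 of \cite{BDGR}, to which the paper simply defers.
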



\subsection{Proof of Theorem \ref{theor}}


We split this proof in several steps, summarized in Propositions \ref{rhout} to \ref{last}, and a final part in which the convergence of all the terms in eq. (\ref{kifu}) is established.


\subsubsection{Step 1: Asymptotic form of the molecular distribution function}


We first identify the asymptotic structure of the fluctuations of molecular distribution function about the Maxwellian state $M$.

\medskip
\begin{proposition}\label{rhout}
Under the same assumptions as in  Theorem \ref{theor}, there exist two functions $\rho\equiv\rho(t,x)\in\bR$, $\th\equiv\th(t,x)\in\bR$ and a vector field $u\equiv u(t,x)\in\bR^3$ satisfying
$$
\rho,\th\in L^\infty(\bR_+\times\bR^3)\,,\qquad u\in L^\infty(\bR_+\times\bR^3;\bR^3)
$$
such that the limiting fluctuation $g$ of molecular distribution function about $M$ is of the form (\ref{eqr}) for a.e $t,x \in \bR_+^* \times \bR^3$. 

Moreover, $u$ satisfies the divergence-free condition
$$
\Div_xu=0\,.
$$

Finally
$$
\int_{\bR^3}\tilde A(w)(w\cdot\grad_xg)M\,\dd w=\nu(\grad_xu+(\grad_xu)^T)
$$
where $\tilde A$ is defined in (\ref{defAtilde}), and $\nu$ is defined in (\ref{nuka}).
\end{proposition}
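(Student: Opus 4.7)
My plan is to split the proof into four logical pieces, each addressing one of the stated conclusions. First, identify the limiting fluctuation $g$ as an element of $\Ker\cL$ by extracting the leading order from the scaled equation for $f_n$; second, transfer the uniform weighted $L^2(M\,\dd w)$ bound on $g_n$ to $L^\infty$ bounds on the macroscopic moments $\rho,u,\theta$; third, derive $\Div_x u=0$ from the molecular continuity equation; fourth, verify the Newtonian-stress identity by a direct moment computation.

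For Step 1, multiply the second equation of (\ref{kifu}) by $\eps_n^2/\mu_n$:
\[
\tfrac{\eps_n^2}{\mu_n}\d_t f_n + \tfrac{\eps_n}{\mu_n}w\cdot\grad_x f_n = \tfrac{\eps_n^2}{\mu_n^2}\cR(f_n,F_n) + \cC(f_n).
\]
Using $\cC(M)=0$ and $f_n=M(1+\eps_n g_n)$, one expands $\cC(f_n) = -\eps_n M\cL g_n + \eps_n^2\cQ(Mg_n,Mg_n)$, where $\cQ$ is the symmetric bilinear part of $\cC$. Dividing by $\eps_n$, the scaling assumptions $\eps_n/\mu_n^2\to 0$ and $\mu_n\to 0$ imply $\eps_n/\mu_n\to 0$, so every coefficient on the right side tends to zero: Assumption (H5) combined with decay (a) controls the $\cR$ contribution in a weighted norm, while the bilinear term carries a vanishing prefactor $\eps_n$ and is bounded by hypothesis (b). Testing against $\cL\phi$ for smooth compactly supported $\phi$ in $w$, using the self-adjointness $\cL^*=\cL$ and the weak convergence $g_n\wto g$, yields $\cL g=0$. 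The characterization (\ref{null}) of $\Ker\cL$ then produces the decomposition (\ref{eqr}).

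For Step 2, observe that $\rho$, $u$, $\theta$ are linear functionals of $g$ against $M$, $wM$, $\tfrac12(|w|^2-3)M$, each of which lies in $L^2((1+|w|^2)^{-q}M\,\dd w)$; Cauchy--Schwarz with the weighted bound of (b), combined with weak lower semicontinuity, produces the required $L^\infty(\bR_+\times\bR^3)$ bounds. For Step 3, integrating the $f_n$-equation against $\dd w$ over $\bR^3$ annihilates both collision integrals: $\int\cC(f_n)\,\dd w=0$ is molecular mass conservation, while $\int\cR(f_n,F_n)\,\dd w=0$ follows from Assumption (H1) together with (\ref{Colli2}), since $q(|\eps V-W|)=|\eps V-W|\Si_{pg}(|\eps V-W|)$ makes the gain and loss integrals equal after relabeling. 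This leaves $\eps_n\d_t\!\int g_n M\,\dd w + \Div_x\!\int w g_n M\,\dd w=0$; passing to the limit and noting $\int w g M\,\dd w=u$ yields $\Div_x u=0$.

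For Step 4, substitute (\ref{eqr}) into $\int\tilde A(w)(w\cdot\grad_x g)M\,\dd w$. The $\rho$ and $\theta$ contributions vanish by parity (their integrands are odd in $w$), leaving $\d_k u_l\int\tilde A_{ij}w_k w_l M\,\dd w$. Because $\tilde A\perp\Ker\cL$ one has $\int\tilde A_{ij}|w|^2 M\,\dd w=0$, so $w_k w_l$ may be replaced by $A_{kl}$. Using $\cL\tilde A=A$ and $\cL=\cL^*$, plus $O(3)$-equivariance, the fourth-order tensor $\int\tilde A_{ij}A_{kl}M\,\dd w$ must have the isotropic form $C(\delta_{ik}\delta_{jl}+\delta_{il}\delta_{jk}-\tfrac23\delta_{ij}\delta_{kl})$; contracting on $ij=kl$ gives $10C=\int\tilde A\!:\!\cL\tilde A\,M\,\dd w=10\nu$ by (\ref{nuka}), so $C=\nu$. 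Combined with $\Div_x u=0$, this produces the claimed identity. The principal obstacle is Step 1: weak convergence of $g_n$ alone cannot pass through the bilinear Boltzmann term, so one must exploit the vanishing prefactor $\eps_n$ and the weighted $L^2$ bound of (b); in parallel, the $\cR$-contribution requires careful handling using (H5) together with the uniform decay (a) on $F_n$, since the prefactor $\eps_n/\mu_n^2$ is the most delicate ratio in the scaling.
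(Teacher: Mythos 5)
Your proof is correct and follows essentially the same route as the paper: multiply the scaled equation for $g_n$ by $\eps_n^2/\mu_n$, show that every term other than $\cL g_n$ vanishes in $\cD'$ (using a uniform bound on the $\cR$-contribution and the vanishing prefactors $\eps_n/\mu_n^2$, $\eps_n$, $\eps_n/\mu_n$), conclude $\cL g=0$ and read off the infinitesimal Maxwellian, then obtain the incompressibility and the Newtonian-stress identity by the standard moment computations (which the paper simply delegates to Propositions 6 and 7 of \cite{BDGR}, and which you carry out correctly, including the isotropy argument giving the constant $\nu$). The only minor discrepancy is that the paper bounds $\int\cR(f_n,F_n)M^{-1}\phi\,\dd w$ using Assumption (H1) (the marginal of $\Pi_{gp}$ in $w$) together with hypotheses (a)--(b) rather than (H5), but either assumption yields the required local uniform bound, so this does not affect the argument.
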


\begin{proof}
Since $\cC$ is a quadratic operator, its Taylor expansion terminates at order $2$, so that
$$
\ba
\cC(M(1+\eps_ng_n))=&\cC(M)+\eps_nD\cC(M)\cdot(Mg_n)+\eps_n^2\cC(Mg_n)
\\
=&-\eps_nM\cL g_n+\eps_n^2M\cQ(g_n)\,,
\ea
$$
where $\cL$ is defined by (\ref{defL}) and
\begin{equation} \label{defQ}
\cQ(\phi):=M^{-1}\cC(M\phi)\,.
\end{equation}
Then the kinetic equation for the propellant (second line of eq. (\ref{kifu})) can be recast in terms of the fluctuation of the distribution function $g_n$ as follows:
\begin{equation} \label{eqg}
\d_tg_n+\frac1{\eps_n}w\cdot\grad_xg_n+\frac{\mu_n}{\eps_n^2}\cL g_n=\frac1{\mu_n\eps_n}M^{-1}\cR(M(1+\eps_n g_n),F_n)+\frac{\mu_n}{\eps_n}\cQ(g_n)\,.
\end{equation}

Multiplying each side of this equation by $\eps_n^2/\mu_n$ shows that
\begin{equation} \label{eqgpr}
\cL g_n=\frac{\eps_n}{\mu_n^2}M^{-1}\cR(M(1+\eps_n g_n),F_n)+\eps_n\cQ(g_n)-\frac{\eps_n^2}{\mu_n}\d_tg_n-\frac{\eps_n}{\mu_n}w\cdot\grad_xg_n\,.
\end{equation}

The last two terms of this identity clearly converge to $0$ in the sense of distributions, since $g_n\wto g$ in $L^2_{loc}(\bR_+^*\times\bR^3\times\bR^3)$ weak.
 
Next, we observe that, for $w',w'_*$ defined by (\ref{cc2}) and $\phi \in C_c(\bR^3)$, one has
$$ 
\ba
\int_{\bR^3}\cQ(g_n)(w)\phi(w)\,\dd w&
\\
=\iiint\left(\frac{\phi(w')}{M(w')}-\frac{\phi(w')}{M(w')}\right)M(w_*)g_n(w_*)M(w)g_n(w)c(w-w_*,\om)\,\dd\om\dd w_*\dd w&\,.
\ea
$$
By the Cauchy-Schwarz inequality,
$$
\ba
\left|\int_{\bR^3}\cQ(g_n)(w)\phi(w)\,\dd w\right|
\\
\le C\iint_{\bR^3\times\bR^3}M(w_*)g_n(w_*)M(w)g_n(w)(1+|w|+|w_*|)\,\dd w_*\dd w  
\\
\le C\int_{\bR^3}M(w)g_n(w)^2\,\dd w\,\int_{\bR^3}M(w)(1+|w|)^2\,\dd w\,.
\ea
$$
Therefore
$$
\int_{\bR^3}\cQ(g_n)(w)\phi(w)\,\dd w\hbox{ is bounded in }L^1_{loc}(\bR_+^*\times\bR^3)
$$
for each $\phi \in C_c(\bR^3)$, so that 
$$
\eps_n \cQ(g_n)\to\hbox{ in }\cD'(\bR_+^*\times\bR^3)\,.
$$

Similarly
$$ 
\ba
\int_{\bR^3}\cR(f_n, F_n)M^{-1}(w)\phi(w)\,\dd w
\\
=\iiint_{\bR^3\times\bR^3\times \bR^3}\left(\frac{\phi(w)}{M(w)}-\frac{\phi(v)}{M(v)}\right)f_n(W)F_n(V)\Pi_{gp}(w,\dd V\dd W)\,\dd w 
\ea
$$
so that 
$$ 
\ba
\left|\int_{\bR^3}\cR(f_n, F_n)M^{-1}(w)\phi(w)\,\dd w\right|\le C\iint_{\bR^3\times\bR^3}F_n(V)f_n(W)q(|\eps_n V - W|)\,\dd V\dd W& 
\\
\le C\int_{ \bR^3}M(W)(1+|g_n|)(W)(1+|W|)\,\dd W&\,,
\ea
$$
because of assumptions (a)-(b). Therefore, the sequence
$$
\int_{\bR^3}\cR(f_n, F_n)M^{-1}(w)\phi(w)\,\dd w
$$
is bounded in $L^\infty_{loc}(\bR_+^*\times\bR^3)$, and hence
$$
\frac{\eps_n}{\mu_n^2}M^{-1}\cR(f_n, F_n)\to 0\hbox{ in }\cD'(\bR_+^*\times\bR^3\times\bR^3)\,.
$$ 

Hence, we deduce from (\ref{eqgpr}) that
$$
\cL g_n\to 0\hbox{ in }\cD'(\bR_+^*\times\bR^3\times\bR^3)\,.
$$
On the other hand, assumption (b) and the fact that $g_n\wto g$ in $L^2_{loc}(\bR_+^*\times\bR^3\times\bR^3)$ imply that
$$
\cL g_n\wto\cL g\quad\hbox{ in }L^2_{loc}(\bR_+^*\times\bR^3)\hbox{ weak.}
$$
Therefore $\cL g=0$. 

Since $\Ker\cL$ is the linear span of $\{1,v_1,v_2,v_3,|v|^2\}$, this implies the existence of $\rho,\th\in L^2_{loc}(\bR_+\times\bR^3)$ and of $u\in L^2_{loc}(\bR_+^*\times\bR^3;\bR^3)$ such that (\ref{eqr}) holds.
That $\rho,\th\in L^\infty_{loc}(\bR_+\times\bR^3)$ and $u\in L^\infty_{loc}(\bR_+^*\times\bR^3;\bR^3)$ follows from assumption (b) and the formulas
$$
\rho=\int_{\bR^3}g M\,\dd w\,,\quad u=\int_{\bR^3}wg M\,\dd w\,,\quad\th=\int_{\bR^3}(\tfrac13|v|^2-1)g M\,\dd w\,.
$$

The remaining statements, i.e. the divergence free condition satisfied by $u$ and the computation of
$$
\int_{\bR^3}\tilde A(w)(w\cdot\grad_xg)M\,\dd w
$$
are obtained as in \cite{BDGR} --- specifically, as in Propositions 6 and 7 of \cite{BDGR} respectively.

\end{proof}

\smallskip
\begin{remark}
In the case of elastic collisions between the gas molecules and the particles in the dispersed phase, using more carefully the symmetries in the collision integrals leads to an estimate of the form
$$ 
\left|\int_{\bR^3}\cR(f_n, F_n)M^{-1}(w)\phi(w)\,\dd w\right|\leq C(\eps_n+\eta_n)\,,
$$
so that the assumption $\eps_n/\mu_n\to 0$ (instead of $\eps_n/\mu_n^2\to 0$) is enough to guarantee that
$$
\frac{\eps_n}{\mu_n^2}M^{-1}\cR(f_n, F_n)\to 0\hbox{ in }\cD'(\bR_+^*\times\bR^3\times\bR^3)\,.
$$
The same is true for the inelastic collision model if $\beta=1$, i.e. if the surface temperature of the particles or droplets is equal to the temperature of the Maxwellian around which the distribution function of the gas is linearized.
\end{remark}


\subsubsection{Step 2: Asymptotic deflection and friction terms}


The following result can then be proved exactly as in \cite{BDGR} (more precisely, see Propositions 4 and 5 in \cite{BDGR}).

\begin{proposition}\label{defl}
Under the same assumptions as in Theorem \ref{theor},
$$
\ba
{}&\frac1{\eta_n}\cD(F_n,f_n)\to\ka\Div((v-u)F)&&\quad\hbox{ in }\cD'(\bR_+^*\times\bR^3\times\bR^3),
\\
&\frac1{\eps_n}\int w\cR(f_n,F_n)dw\to\ka\int(v-u)Fdv&&\quad\hbox{ in }\cD'(\bR_+^*\times\bR^3),
\ea
$$
with $\ka$ defined by eq. (\ref{nuka}). 
\end{proposition}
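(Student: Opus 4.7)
The two assertions reduce to one computation, so the plan is to treat the $\cD$ statement in detail and then explain how to recycle the estimates for the momentum integral of $\cR$.

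\textbf{Weak form and Taylor expansion.} Fix a test function $\phi\in C^\infty_c(\bR_+^*\times\bR^3\times\bR^3)$. Using (\ref{newd}) and the observation that $|\eps v-w|\Si_{pg}(|\eps v-w|)=q(|\eps v-w|)$ (which follows by comparing (\ref{Colli2}) with (H1)), I write
$$
\int\phi\cD(F_n,f_n)\,\dd v=\iiint F_n(V)f_n(W)\phi(v)\Pi_{pg}^{\eps_n,\eta_n}(v,\dd V\dd W)\,\dd v-\iint F_n(v)f_n(w)\phi(v)q(|\eps_n v-w|)\,\dd v\dd w.
$$
Taylor-expanding $\phi(v)=\phi(V)+(v-V)\cdot\grad_v\phi(V)+\tfrac12 D^2\phi\cdot(v-V)^{\otimes 2}$ inside the gain term and invoking (H1) to cancel the $\phi(V)$ contribution against the loss term, then (H2) to evaluate the first moment of $\Pi_{pg}$ in $v$, one obtains
$$
\int\phi\cD(F_n,f_n)\,\dd v=-\frac{\eta_n}{\eps_n(1+\eta_n)}\iint F_n f_n\,\grad_v\phi\cdot(\eps_n v-w)Q(|\eps_n v-w|)\,\dd v\dd w+\mathrm{Rem}_n,
$$
where the remainder is quadratic in $v-V$ and is controlled by (H3), giving $|\mathrm{Rem}_n|=O(\eta_n^2/\eps_n^2)$ after accounting for the factor $\eps$ in (H3). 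Dividing by $\eta_n$ and using the assumption $\eta_n/\eps_n^2\to 0$ kills this remainder.

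\textbf{The Gaussian moment computation.} Substituting $f_n=M(1+\eps_ng_n)$ and rearranging, the task reduces to computing the limit of
$$
\mathrm{I}_n:=-\frac{1}{\eps_n(1+\eta_n)}\iint F_n M\,\grad_v\phi\cdot(\eps_n v-w)Q(|\eps_n v-w|)\,\dd v\dd w,\ \
\mathrm{II}_n:=-\frac{1}{1+\eta_n}\iint F_n Mg_n\,\grad_v\phi\cdot(\eps_n v-w)Q(|\eps_n v-w|)\,\dd v\dd w.
$$
The key linear-algebra input is the identity
$$
\int M(w)(V-w)Q(|V-w|)\,\dd w=\ka\,V+O(|V|^2),\qquad V:=\eps_n v,
$$
which I would prove by the substitution $z=V-w$, Taylor expansion $M(V-z)=M(z)+(V\cdot z)M(z)+\cdots$, the vanishing of $\int M(z)zQ(|z|)\,\dd z$ by parity, and the rotational identity $\int M(z)\,z\otimes z\,Q(|z|)\,\dd z=\ka\,I$ (which is exactly the definition of $\ka$ in (\ref{nuka})). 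Inserting this in $\mathrm{I}_n$ cancels $\eps_n$, leaving $-\ka\int F_n v\cdot\grad_v\phi\,\dd v$ up to an $O(\eps_n)$ term that vanishes thanks to the decay assumption (a).

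\textbf{Passing to the limit.} Using $F_n\wto F$ in $L^\infty_{loc}$ weak-$*$, the decay in assumption (a), the dominated convergence of $(\eps_n v-w)Q(|\eps_n v-w|)\to -wQ(|w|)$ in $\mathrm{II}_n$, and the weak convergence $g_n\wto g$ in $L^2_{loc}(M\dd w)$ controlled by assumption (b), I send $n\to\infty$:
$$
\mathrm{I}_n\to -\ka\int F\,v\cdot\grad_v\phi\,\dd v,\qquad \mathrm{II}_n\to \int F\,\grad_v\phi\cdot\Bigl[\int M g\,w\,Q(|w|)\dd w\Bigr]\,\dd v.
$$
Plugging in the form (\ref{eqr}) of $g$ and using parity ($\int M w Q(|w|)\dd w=0$ and $\int M w(|w|^2-3)Q(|w|)\dd w=0$) plus the same rotational identity, only the $u\cdot w$ term survives and produces $\ka u$. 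Summing yields $-\ka\int F(v-u)\cdot\grad_v\phi\,\dd v$, which is precisely $\ka\Div_v((v-u)F)$ tested against $\phi$.

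\textbf{The $\cR$ statement.} For the momentum integral of $\cR$ I proceed analogously without any Taylor expansion: by (H1) and (H2) applied to $\Pi_{gp}$,
$$
\int w\cR(f_n,F_n)\,\dd w=\frac{1}{1+\eta_n}\iint F_n f_n\,(\eps_n v-w)Q(|\eps_n v-w|)\,\dd v\dd w,
$$
after the $Wq(|\eps V-W|)$ piece of the gain term cancels exactly against the loss term. Dividing by $\eps_n$ gives the same combination as $\mathrm{I}_n+\mathrm{II}_n$ but without $\grad_v\phi$, so the previous Gaussian moment computation immediately yields the limit $\ka\int(v-u)F\,\dd v$.

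\textbf{Main difficulty.} The real work is the remainder control in the Taylor step: bounding $\int|v-V|^2\Pi_{pg}(v,\dd V\dd W)\dd v$ in terms of (H3) requires the elementary algebraic identity $\eps(v-V)=(1+\eta)\bigl(\eps v-\tfrac{\eps V+\eta W}{1+\eta}\bigr)-\eta(\eps v-W)$, after which one also needs a companion moment bound for $\int|\eps v-W|^2\Pi_{pg}\,\dd v$ to close the estimate. Together with the weight factor $(1+|V|^2)^{-p}$ from the decay of $F_n$, these controls show that the remainder divided by $\eta_n$ is $O(\eta_n/\eps_n^2)$ and vanishes under the prescribed scaling. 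The rest is a careful bookkeeping of which moments of $M$, $Q$, and $g$ survive after parity in $w$.
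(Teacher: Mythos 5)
Your proposal is correct and follows essentially the same route as the proof the paper relies on but does not reproduce (it simply cites Propositions 4 and 5 of \cite{BDGR}): weak formulation, Taylor expansion of the test function in $v$ with the zeroth, first and second moments of $\Pi_{pg}$ controlled by (H1)--(H3) so that the remainder is $O(\eta_n/\eps_n^2)$ after division by $\eta_n$, the splitting $f_n=M(1+\eps_n g_n)$, and the Gaussian moment identity $\int M(z)\,z\otimes z\,Q(|z|)\,\dd z=\ka I$ plus parity to identify $\ka$. The only caveat, shared with the paper's own (formal) treatment, is that passing to the limit in the products $F_ng_n$ of weakly converging sequences is taken for granted.
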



\subsubsection{Step 3: Asymptotic friction flux}


The asymptotic friction flux is handled as in Proposition 9 in \cite{BDGR}, with some modifications due to the differences in the scalings used here and in \cite{BDGR}.

\begin{proposition}\label{last}
Under the same assumptions of Theorem \ref{theor},
$$
\frac1{\mu_n}\int_{\bR^3}\tilde A(w)\cR(f_n,F_n)(w)\,\dd w\to 0\quad\hbox{ in }\cD'(\bR_+^*\times\bR^3)\,.
$$
\end{proposition}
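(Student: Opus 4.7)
The plan is to rewrite $\int\tilde A(w)\cR(f_n,F_n)(w)\,\dd w$ in a form where the gain and loss parts of $\cR$ combine into a single integral against $\Pi_{gp}$, and then to exploit Assumption~(H4) and the rotational invariance (\ref{InvPi00}) to drive everything to zero after division by $\mu_n$. The key preliminary step uses Assumption~(H1): relabelling $w\leftrightarrow W$ in the loss term and invoking (H1) gives
\[
\iint\tilde A(w)f_n(w)F_n(V)q(|\eps_n V-w|)\,\dd V\dd w=\iiint\tilde A(W)f_n(W)F_n(V)\,\Pi_{gp}(w,\dd V\dd W)\,\dd w,
\]
so that
\[
\int\tilde A\,\cR(f_n,F_n)\,\dd w=\iiint[\tilde A(w)-\tilde A(W)]F_n(V)f_n(W)\,\Pi_{gp}(w,\dd V\dd W)\,\dd w.
\]
Substituting $f_n=M(1+\eps_ng_n)$ splits this as $I_n+\eps_nJ_n$, where $I_n$ and $J_n$ are the same triple integral with $f_n(W)$ replaced respectively by $M(W)$ and by $M(W)g_n(W)$.

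First I would treat $I_n$. The test map $\Phi(w,W):=[\tilde A(w)-\tilde A(W)]M(W)$ satisfies entrywise the growth condition required in the second part of~(H4), thanks to (A2). Using $F_n(V)\le C(1+|V|^2)^{-p}$ from hypothesis~(a), one may replace $\Pi_{gp}^{\eps_n,\eta_n}$ by the limit $\Pi_{gp}^{0,0}$ in $I_n$, with an error $O(\eps_n+\eta_n)$ that is locally bounded in $(t,x)$ uniformly in $n$. The limiting integral vanishes by symmetry: for every $R\in O_3(\bR)$, the invariance (\ref{InvPi00}) together with $M(RW)=M(W)$ allows one to replace the integrand by its image under $(w,W)\mapsto(Rw,RW)$; averaging over $R\in SO(3)$ and using $\tilde A(w)=\a(|w|)A(w)$ with $A$ symmetric and trace-free, one gets $\int_{SO(3)}\tilde A(Rw)\,\dd R=\a(|w|)\int_{SO(3)}RA(w)R^T\,\dd R=0$, and likewise with $W$ in place of $w$. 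Hence $I_n=O(\eps_n+\eta_n)$ in $L^\infty_{loc}(\bR_+^*\times\bR^3)$.

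For $J_n$, I would use the crude bound $|\tilde A(w)-\tilde A(W)|\le C(1+|w|^2)(1+|W|^2)$ (from~(A2)) and $F_n(V)\le C(1+|V|^2)^{-p}$ (from~(a)) to dominate the absolute integrand of $J_n$ by the one appearing in Assumption~(H5) with $h:=g_n$, which yields $|J_n|\le C\|g_n\|_{L^2(M\dd w)}$. Hypothesis~(b) makes this quantity locally bounded in $(t,x)$ uniformly in $n$. Combining the two estimates,
\[
\frac1{\mu_n}\int\tilde A\,\cR(f_n,F_n)\,\dd w=\frac{I_n}{\mu_n}+\frac{\eps_n}{\mu_n}J_n=O\!\left(\tfrac{\eps_n+\eta_n}{\mu_n}\right)+O\!\left(\tfrac{\eps_n}{\mu_n}\right).
\]
The scalings $\eps_n/\mu_n^2\to 0$ and $\mu_n\to 0$ yield $\eps_n/\mu_n=\mu_n(\eps_n/\mu_n^2)\to 0$, while $\eta_n/\mu_n=(\eta_n/\eps_n^2)\cdot\eps_n\cdot(\eps_n/\mu_n)\to 0$ as a product of bounded factors two of which tend to zero. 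The full expression therefore tends to $0$ in $L^\infty_{loc}$, and a fortiori in $\cD'(\bR_+^*\times\bR^3)$.

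The main obstacle is the rotational cancellation for the limit of $I_n$; it rests entirely on the group-theoretic identity $\int_{SO(3)}RA(w)R^T\,\dd R=0$ for the traceless tensor $A$, which becomes applicable only after the loss and gain parts of $\cR$ have been merged into the single difference $\tilde A(w)-\tilde A(W)$ via Assumption~(H1). Without that rewriting, one would have to estimate $\int F_n(V)q(|\eps_n V-w|)\,\dd V$ against $q(|w|)\int F_n$ with a quantitative rate, which would require a regularity assumption on $q$ not provided by (H1).
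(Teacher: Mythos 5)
Your argument is correct and follows essentially the same route as the paper's proof: merging the gain and loss terms of $\cR$ into the single difference $\tilde A(w)-\tilde A(W)$ via (H1), splitting $f_n=M(1+\eps_n g_n)$, treating the $M$ part by the quantitative convergence in (H4) plus the $O_3$ cancellation, treating the $Mg_n$ part by (H5) and hypothesis (b), and closing with the scaling relations $\eps_n/\mu_n\to 0$, $\eta_n/\mu_n\to 0$. The only difference is that you spell out the rotation-averaging identity $\int_{SO(3)}RA(w)R^T\,\dd R=0$ and the verification of the rates, which the paper delegates to its companion reference; this is a welcome completion rather than a deviation.
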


\begin{proof}
First, we compute
$$ 
\ba
\int_{\bR^3}\tilde A(w)\cR(M,F_n)(w)\,\dd w
\\
=\iiint_{\bR^3\times\bR^3\times\bR^3}F_n(V)M(W)(\tilde A(w)-\tilde A(W))\Pi^{\eps_n,\eta_n}_{gp}(w,\dd V\dd W)\,\dd w\,. 
\ea
$$
We see that
$$
\ba
\left|\int\tilde A(w)\cR(M,F_n)(w)\,\dd w\!-\!\iiint F_n(V)M(W)(\tilde A(w)\!-\!\tilde A(W))\Pi^{0,0}_{gp}(w,\dd V\dd W)\,\dd w\right|&
\\
\le C\int(1+|V|^2)^{-p}\left|\iint \Phi(w,W)(\Pi^{\eps_n,\eta_n}_{gp}(w,dVdW) - \Pi^{0,0}_{gp}(w,dVdW)) dw\right|&\,,
\ea
$$
with
$$
\Phi(w,W)=M(W)(\tilde A(w)-\tilde A(W))\,.
$$
By assumption (A2), $\Phi\in C^1(\bR^3\times\bR^3)$ and 
$$
|\Phi(w,W)|+|\nabla_w\Phi(w,W)|\le C(1+|w|^2+|W|^2)M(W)\,.
$$ 
Therefore, the second part of Assumption (H4) implies that
$$ 
\ba
\int_{\bR^3}\tilde A(w)\cR(M,F_n)(w)\,\dd w&
\\
= \iiint F_n(V)M(W)(\tilde A(w)-\tilde A(W))\Pi^{0,0}_{gp}(w,\dd V\dd W)\,\dd w+O(\eps_n+\eta_n)&\,.
\ea
$$
We conclude by using the symmetry assumption (first part of Assumption (H4)) as in \cite{BDGR}, and arrive at the bound
\be\lb{FricFlux1}
\sup_{t+|x|<R}\int_{\bR^3}\tilde A(w)\cR(M,F_n)(w)\,\dd w=O(\eps_n+\eta_n)
\ee
for all $R>0$.

Finally, observe that, for some $p>3$, one has
$$ 
\ba
\left|\int_{\bR^3}\cR(Mg_n, F_n)\tilde A(w)\,\dd w\right|&
\\
\le C_p\iiint_{\bR^3\times\bR^3\times\bR^3}\frac{|w|^2+|W|^2}{(1+|V|^2)^p}M(W)|g_n(W)|\Pi_{gp}(w,\dd V\dd W)\,\dd w&\,, 
\ea
$$
where $C_p\equiv C_p(t,x)\in L^\infty_{loc}(\bR_+^*\times\bR^3)$. The integral on the right hand side of this last inequality is bounded in $L^\infty_{loc}(\bR_+^*\times\bR^3)$ by (H5) and assumption (b) in Theorem \ref{theor}.

Since $\frac{\eps_n}{\mu_n}\to 0$,
$$
\frac{\eps_n}{\mu_n}\int_{\bR^3}\tilde A(w)\cR(Mg_n,F_n)\,\dd w\to 0\quad\hbox{ in }\cD'(\bR_+^*\times\bR^3)\,.
$$
With (\ref{FricFlux1}), this concludes the proof since $f_n= M(1+\eps_ng_n)$.
\end{proof}


\subsubsection{Step 4: End of the proof of Theorem \ref{theor}}


For simplicity, we henceforth use the notation
$$
\la\phi\ra:=\int_{\bR^3}\phi(w)M(w)\,\dd w\,.
$$

Since $\cL = \cL^*$
$$
\frac{\mu_n}{\eps_n}\la A(w)g_n\ra=\frac{\mu_n}{\eps_n}\la(\cL\tilde A)(w)g_n\ra=\La\tilde A(w)\frac{\mu_n}{\eps_n}\cL g_n\Ra\,.
$$
Then, we use the Boltzmann equation for $g_n$ written in the form (\ref{eqgpr}) to express the term $\frac1{\eps_n}\cL g_n$:
$$
\ba
\frac{\mu_n}{\eps_n}\la A(w)g_n\ra=&\mu_n\la\tilde A(w)\cQ(g_n)\ra-\la\tilde A(w)(\eps_n\d_t+w\cdot\grad_x)g_n\ra
\\
&+\frac1{\mu_n}\la\tilde A(w)M^{-1}\cR(f_n,F_n)\ra\,.
\ea
$$

We first pass to the limit in $\la\tilde A(w)(\eps_n\d_t+w\cdot\grad_x)g_n\ra$ in the sense of distributions, using assumption (A2) and assumption (b) in Theorem \ref{theor}:
$$ 
\la\tilde A(w)(\eps_n\d_t+w\cdot\grad_x)g_n\ra\to\la\tilde A(w)w\cdot\grad_xg\ra\,,\quad\hbox{ in }\cD'(\bR_+^*\times\bR^3)\,.
$$

Let
$$ 
P(w,w_*):=\int_{\bS^2}(\tilde A(w') - \tilde A(w))c(w-w_*,\om)\,\dd\om\,.
$$
Then
$$ 
\la\tilde A\cQ(g_n)\ra=\iint_{\bR^3\times\bR^3}P(w,w_*)M(w_*)g_n(w_*)M(w)g_n(w)\,\dd w_*\dd w\,.
$$
Clearly $|P(w,w_*)|\le C(1+|w|^3+|w_*|^3)$ by assumption (A2). Then, 
$$
\ba
|\la\tilde A(w)\cQ(g_n)\ra|\le\left(\iint_{\bR^3\times\bR^3}\!\!\! M(w_*)g^2_n(w_*) M(w)g^2_n(w)\,\dd w_*\dd w\right)^{1/2}&
\\
\times\left(\iint_{\bR^3\times\bR^3}\!\!\! M(w_*) M(w)(1+|w|^3+|w_*|^3)^2\,\dd w_*\dd w\right)^{1/2}&\,,
\ea
$$
so that
$$ 
\mu_n\la\tilde A(w)\cQ(g_n)\ra\to 0\hbox{ in }\cD'(\bR_+^*\times\bR^3)\,.
$$
By Proposition \ref{last}, 
$$
\frac1{\eps_n}\la A(w)g_n\ra\to -\nu\left((\grad_xu)+(\grad_xu)^T\right)\quad\hbox{ in }\cD'(\bR_+\times\bR^3),
$$
so that 
$$
\Div_x\frac{\mu_n}{\eps_n}\la A(w)g_n\ra\to -\nu\Dlt_xu-\nu\grad_x\Div_xu=-\nu\Dlt_xu
$$
in $\cD'(\bR_+^*\times\bR^3)$ since $u$ is divergence free by Proposition \ref{rhout}.

Hence, for each divergence free test vector field $\xi\equiv\xi(x)\in\bR^3$,
$$
\ba
\int_{\bR^3}\frac{\mu_n}{\eps_n}\la w\otimes wg_n\ra:\grad\xi\,\dd x=\int_{\bR^3}\frac{\mu_n}{\eps_n}\la A(w)g_n\ra:\grad\xi\,\dd x
\to-\nu\int_{\bR^3}\grad_xu:\grad\xi\,\dd x
\ea
$$ 
in $\cD'(\bR_+^*)$.

Multiplying both sides of (\ref{eqg}) by $wM(w)$ and integrating over $\bR^3$, we see that
\begin{equation}\label{newlin}
\d_t\la wg_n\ra+\frac1{\eps_n}\Div_x\la w\otimes wg_n\ra=\frac1{\mu_n\eps_n}\la wM^{-1}\cR(f_n,F_n)\ra\,.
\end{equation}

By Proposition \ref{rhout}, 
$$
\la wg_n\ra\to\la wg\ra=u
$$
in  $\cD'(\bR_+^*\times\bR^3)$, while, by Proposition \ref{defl},
$$
\frac1{\eps_n}\la wM^{-1}\cR(f_n,F_n)\ra\to\ka\int_{\bR^3}(v-u)F\,\dd v
$$
in  $\cD'(\bR_+^*\times\bR^3)$. Thus, for each divergence free test vector field $\xi\equiv\xi(x)\in\bR^3$, passing to the limit in the weak formulation (in $x$) of the momentum balance law (\ref{newlin}), i.e.
$$
\mu_n\d_t\int\xi\cdot\la wg_n\ra-\frac{\mu_n}{\eps_n}\int_{\bR^3}\la A(w)g_n\ra:\grad\xi\,\dd x=\frac1{\eps_n}\int_{\bR^3}\xi\cdot\la wM^{-1}\cR(f_n,F_n)\ra\,\dd x\,,
$$
results in
\be\lb{WFormStokes}
0=-\nu\int_{\bR^3}\grad_xu:\grad\xi\,\dd x+\ka\iint_{\bR^3\times\bR^3}\xi\cdot(v-u)F\,\dd v\dd x\,.
\ee
In other words, let $T=(T_1,T_2,T_3)\in\cD'(\bR^3;\bR^3)$ be defined as follows:
$$
T:=\nu\Dlt_xu+\ka\iint_{\bR^3}\xi\cdot(v-u)F\,\dd v\,.
$$
Then (\ref{WFormStokes}) is equivalent to the fact that
$$
\sum_{k=1}^3\ll T_k,\xi_k\gg_{\cD',C^\infty_c}=0
$$
for each test vector field $\xi\in C^\infty_c(\bR^3)$ such that $\Div\xi=0$. (In the identity above, we have denoted $\ll ,\gg_{\cD',C^\infty_c}$ the pairing between distributions and compactly supported $C^\infty$ functions.)

By de Rham's characterization of currents homologous to $0$ (see Thm. 17' in \cite{deRham}), this implies the existence of $p\in\cD'(\bR^3)$ such that
$$
T=\grad_xp\,.
$$
Thus, the fact that the identity (\ref{WFormStokes}) holds for each test vector field $\xi\in C^\infty_c(\bR^3)$ such that $\Div\xi=0$ is the weak formulation (in the sense of distributions) of the last equation in (\ref{VNS}).

Finally, the equation for the distribution function $F$ of the dispersed phase (i.e. the first line of (\ref{kifu})) is
$$
\d_tF_n+v\cdot\grad_xF_n=\frac1{\eta_n}\cD(F_n,f_n).
$$
Since $F_n\to F$ in $L^\infty_{loc}(\bR_+^*\times\bR^3\times\bR^3)$, one has
$$
\d_tF_n+v\cdot\grad_xF_n\to\d_tF+v\cdot\grad_xF\quad\hbox{ in }\cD'(\bR_+^*\times\bR^3\times\bR^3)\,.
$$
By Proposition \ref{defl}, 
$$
\d_tF+v\cdot\grad_xF=\ka\Div_v((v-u)F)\,,
$$
which is the first equation in (\ref{VNS}).

This concludes the proof of Theorem \ref{theor}.


\bigskip
\noindent
{\bf{Acknowledgment}}: The research leading to this paper was funded by the French ``ANR blanche'' project Kibord: ANR-13-BS01-0004, and by Universit\'e Sorbonne Paris Cit\'e, in the framework of the ``Investissements d'Avenir'', 
convention ANR-11-IDEX-0005. V.Ricci acknowledges the support by the GNFM (research project 2015: ``Studio asintotico rigoroso di sistemi a una o pi\`u componenti'').



\begin{thebibliography}{99}

\bibitem{Allaire}
G. Allaire:
\textit{Homogenization of the Navier-Stokes equations in open sets perforated with tiny holes.},
Arch. Ration. Mech. Anal. \textbf{113} (1991), 209--259.

\bibitem{BGL1}
C. Bardos, F. Golse, C.D. Levermore:
\textit{Fluid Dynamic Limits of Kinetic Equations. I. Formal Derivations},
J. Stat. Phys. \textbf{63} (1991), 323--344.

\bibitem{BDGR}
E. Bernard, L.Desvillettes, F.Golse, V.Ricci
\textit{A Derivation of the Vlasov-Navier-Stokes Model for Aerosol Flows from Kinetic Theory},
preprint {\tt arXiv:1608.00422} [math.AP].


\bibitem{Cerci75}
C. Cercignani:
``Theory and Applications of the Boltzmann Equation'',
Scottish Academic Press, Edinburgh, London, 1975.

\bibitem{FCharlesRGD08}
F. Charles:
\textit{Kinetic modelling and numerical simulations using particle methods for the transport of dust in a rarefied gas},
in ``Proceedings of the 26th International Symposium on Rarefied Gas Dynamics'',
AIP Conf. Proc. 1084, (2008), 409--414.

\bibitem{FCharlesPhD}
F. Charles:
\textit{Mod\'elisation math\'ematique et \'etude num\'erique d'un a\'erosol dans un gaz rar\'efi\'e. Application \`a la simulation du transport de particules de poussi\`ere en cas d'accident de perte de vide dans ITER},
PhD Thesis, ENS Cachan, 2009.

\bibitem{FCharlesSDelJSeg}
F. Charles, S. Dellacherie, J. Segr\'e:
\textit{Kinetic modeling of the transport of dust particles in a rarefied atmosphere},
Math. Models Methods Appl. Sci. \textbf{22} (2012), 1150021, 60 pp. 

\bibitem{CioraMurat}
D. Cioranescu, F. Murat: 
\textit{Un terme \'etrange venu d'ailleurs},
In ``Nonlinear Partial Differential Equations and their Applications'', Coll\`ege de France Seminar, vol. 2. 
Research Notes in Mathematics, vol. 60, pp. 98-138, Pitman (1982).



\bibitem{deRham}
G. de Rham:
"Differentiable Manifolds: Forms, Currents, Harmonic Forms", 
Springer-Verlag, Berlin Heidelberg New York Tokyo, 1984.

\bibitem{dego} 
L. Desvillettes, F. Golse:
\textit{A Remark Concerning the Chapman-Enskog Asymptotics}, in Advances in Kinetic Theory and Computing, 
Series on Advances in Mathematics for Applied Sciences, Vol. {\textbf{22}},  World Scientific  Publications, Singapour, (1994), 191--203.

\bibitem{DesvFGRicci08}
L. Desvillettes, F. Golse, V. Ricci:
\textit{The Mean-Field Limit for Solid Particles in a Navier-Stokes Flow},
J. Stat. Phys. \textbf{131} (2008), 941--967.

\bibitem{LDM} 
L. Desvillettes, J. Mathiaud:
\textit{Some aspects of the asymptotics leading
 from gas-particles equations towards multiphase flows equations},
 J. Stat. Phys. {\textbf{141}}, n.1, (2010), 120--141.


\bibitem{GoB}
F. Golse
\textit{Fluid Dynamic Limits of the Kinetic Theory of Gases}
From Particle Systems to Partial Differential Equations, Springer Proceedings in
Mathematics and Statistics, \textbf{75} (2013), 3--91



\bibitem{Hamdache}
K. Hamdache:
\textit{Global Existence and Large Time Behaviour of Solutions for the Vlasov-Stokes Equations},
Japan J. Indust. App!. Math. \textbf{15} (1998), 51--74.

\bibitem{Hauray}
M. Hauray:
\textit{Wasserstein Distances for Vortices Approximation of Euler-Type Equations},
Math. Models Methods Appl. Sci. \textbf{19} (2009), 1357--1384.

\bibitem{Hillairet}
M. Hillairet:
\textit{On the homogenization of the Stokes problem in a perforated domain},
preprint {\tt arXiv:1604.04379} [math.AP]. 

\bibitem{JabinOtto}
P.-E. Jabin, F. Otto:
\textit{Identification of the dilute regime in particle sedimentation},
Comm. Math. Phys. \textbf{250} (2004), 415--432.


\bibitem{Landau10}
L.D. Landau, E.M. Lifshitz:
``Physical Kinetics''. Course of Theoretical Physics. Vol. 10. 
Pergamon Press, 1981.

\bibitem{PLLFluidMech1}
P.-L. Lions:
``Mathematical Topics in Fluid Mechanics. Vol. 1. Incompressible Models'',
Oxford University Press Inc., New York, 1996.

\bibitem{Khruslov}
V.A. L'vov, E.Ya. Khruslov: 
\textit{Perturbation of a viscous incompressible fluid by small particles}
Theor. Appl. Quest. Differ. Equ. Algebra \textbf{267} (1978), 173--177. (Russian)


\end{thebibliography}
\end{document}